\documentclass[10pt,reqno]{amsart}
\usepackage{amssymb,version,graphicx,fancybox,pifont}
\usepackage{multirow}
\usepackage{url,hyperref}
\usepackage{mathrsfs}
\usepackage{amsmath}
\usepackage{subfigure}
\usepackage{enumerate}
\usepackage{booktabs}
\usepackage[table, dvipsnames]{xcolor}
\usepackage{braket}
\usepackage{cleveref}
\usepackage{bm}
\catcode`\@=11
\@addtoreset{equation}{section}   

\@addtoreset{table}{section}   
\renewcommand\thefigure{\thesection.\@arabic\c@figure}
\@addtoreset{figure}{section}   
\renewcommand\thetable{\thesection.\@arabic\c@table}

 \newcommand{\new}{\newcommand*}
 \new{\rnew}{\renewcommand*}
  \new{\newe}{\newenvironment*}
 \new{\stl}{\setlength}
 \stl{\arraycolsep}{0.5mm}

\newtheorem{thm}{\bf Theorem}

\newenvironment{theorem}{\begin{thm}} {\end{thm}}
\newtheorem{lmm}{\bf Lemma}

\newenvironment{lemma}{\begin{lmm}}{\end{lmm}}
\theoremstyle{remark}

\theoremstyle{definition}
\newtheorem{defn}{Definition}[section]
\usepackage{multirow}

\newcommand {\bgeq}[1]{\begin{equation}\label{#1}}
\newcommand \edeq {\end{equation}}
\newcommand \bgth {\begin{theorem}\label}
\newcommand \edth {\end{theorem}}
\newcommand \bglm {\begin{lemma}\label}
\newcommand \edlm {\end{lemma}}
\newcommand {\bgar}[1]{\begin{array}{#1}}
\newcommand {\edar}{\end{array}}

\newtheorem{remark}{Remark}

\newcommand{\bs}[1]{\boldsymbol{#1}}
\usepackage{algorithm, algorithmic}

\title[C-PINN]{C-PINN: A neural network framework based on the Cord\`{e}s condition for solving  linear and fully nonlinear equations in non-divergence form and its applications}
\author[B. Hu, L. Jin and Z. Li]{}

\subjclass{Primary: 35J96, 35J60, 68T07}
\keywords{PINN, Cord\`{e}s condition, linear equation in non-divergence form, Monge-Amp\`{e}re equation, Hamilton-Jacobi-Bellman equation, optimal transport problem}
\thanks{$^\dag$Corresponding author  email:  zxli@shnu.edu.cn. \\ \indent The work was partially supported by the NNSF of China (No.12271366, 12571391, 11871043).}

\begin{document} \maketitle
\centerline{Bingcheng Hu, Lixiang Jin, Zhaoxiang Li$^{\dag}$}
\centerline{Department of  Mathematics, Shanghai Normal University, Shanghai,
200234, P.R. China}


%

\begin{abstract}
In this paper, we propose a novel Physics-Informed Neural Network (PINN) framework based on the Cord\`{e}s condition for solving both linear and fully nonlinear partial differential equations (PDEs) in non-divergence form, together with their applications. By incorporating the operator structure into the loss function, the proposed method improves the conditioning of the associated optimization problem, thereby enhancing training stability and solution accuracy. The framework is further extended to include Hamilton-Jacobi-Bellman and Monge-Amp\`{e}re equations, with applications to optimal transport. Numerical experiments demonstrate the effectiveness and robustness of the method, as well as its capability to address high-dimensional problems, highlighting the promise of learning-based approaches for tackling challenging PDEs.
\end{abstract}

\section{Introduction}
Linear partial differential equations in non-divergence form, such as
\begin{equation} \label{eq:general_nondiv}
    \mathcal{L} u := A(\bs x) : D^2 u + \bs{b}(\bs x) \cdot \nabla u - c(\bs x) u = f(\bs x), \quad \bs x \in \Omega,
\end{equation}
where $D^2u$ denotes the Hessian matrix of $u$, $A: D^2 u$  represents the Frobenius inner product between the matrices $A$ and $D^2 u$, and $\Omega$ is a bounded convex domain in $\mathbb{R}^d$, have been extensively studied in the classical theory of elliptic equations \cite{Gilbarg2001,Krylov2008}. Such equations arise naturally in stochastic control and differential games, where the associated value functions satisfy Hamilton-Jacobi-Bellman (HJB) equations of the form
        \begin{equation} \label{eq:HJB}
            \sup_{\alpha \in \Lambda}(\mathcal{L}^{\alpha}u -f^{\alpha})=0 \quad \text{in }\Omega, \quad u = 0 \quad \text{on }\partial \Omega,
        \end{equation}
with $\Lambda$ denoting the admissible control set and $\mathcal{L}^{\alpha}$ a family of second-order differential operators indexed by $\alpha \in \Lambda$. These equations are typically fully nonlinear and belong to the class of non-divergence form PDEs \cite{Crandall1992,Dragoni2010,Krylov1980}. Another important example is the Monge-Amp\`{e}re (MA) equation,
\begin{equation} \label{eq1}
 {\rm det}(D^{2}u)=f \quad \text{in }\Omega, \quad u = g \quad \text{on }\partial \Omega,
\end{equation}
which serves as a prototypical fully nonlinear PDE involving the determinant of the Hessian. It is closely related to non-divergence form equations through linearization and iterative solution procedures \cite{Caffarelli1995,Dean2003}. The intrinsic non-variational structure of these problems poses significant challenges for both theoretical analysis and numerical approximation \cite{Feng2011,Nochetto2018}.

Despite their theoretical and practical importance, the numerical approximation of linear non-divergence and fully nonlinear PDEs remains highly difficult \cite{Barles1990,Feng2013}. The ability to compute solutions for these equations accurately is crucial for advanced applications, including financial engineering driven by HJB frameworks, as well as Wasserstein-based generative models \cite{Arjovsky2017}, domain adaptation \cite{Courty2017}, computer vision \cite{Solomon2015}, and resource allocation problems \cite{Peyre2019}. Classical approaches, including finite difference methods\cite{Froese2011,Oberman2006}, finite element methods \cite{Feng2019,Kawecki2021,Pal2025,Brenner2011} and spectral method \cite{Wpeipei}, require carefully designed discretizations to ensure stability and consistency. This often results in complicated implementations and limited flexibility \cite{Debrabant2013,Jensen2003,Krylov1997}. Moreover, strong nonlinearity, heterogeneous non-divergence operators, and intrinsic convexity constraints continue to present significant challenges for traditional numerical schemes \cite{Feng2011APNUM,Oberman2008}.

In recent years, Physics-Informed Neural Networks (PINNs) have emerged as a promising mesh-free alternative \cite{Raissi2019, Sirignano2018}. By exploiting the universal approximation capability of deep neural networks and employing automatic differentiation to evaluate differential operators, PINNs incorporate the governing equations directly into the loss function \cite{Karniadakis2021}. Subsequently, a variety of enhancements have been proposed to improve their performance, including adaptive loss weighting and sampling \cite{Xiang2022, Wang2024}, architectural modifications \cite{Jagtap2020,Cho2026}, training stabilization techniques \cite{Lin2024,Lu2021,Guo2022,He2024} and rigorous error analysis. Despite these advances, most existing approaches are primarily tailored to PDEs in divergence form or problems involving lower-order derivatives \cite{Cuomo2022}. Consequently, their applicability to non-divergence and fully nonlinear equations remains limited \cite{Darbon2020,Han2018}. In particular, direct minimization of PDE residual often leads to highly non-convex loss landscapes,  gradient instabilities, and poor convergence, primarily due to the presence of high-order, poorly conditioned derivatives \cite{Krishnapriyan2021,Wang2021,Wang2022}.

Efficient numerical methods for linear and fully nonlinear equations in non-divergence form, particularly  HJB and MA equations, are of fundamental importance in applied mathematics. The HJB equation serves as the cornerstone of stochastic optimal control, while the MA equation plays a central role in optimal transport theory\cite{Brenier1991,Santambrogio2015,Villani2003}. Despite their distinct physical origins, these equations share a profound structural connection. Through algebraic duality, the determinant-based MA equation \eqref{eq1} can be equivalently reformulated as an HJB equation by taking the infimum over symmetric positive-definite matrices
 \begin{equation}
 (\det D^2 u)^{1/d} = \inf_{A > 0,\ \det A = 1} \frac{1}{d}\mathrm{tr}(A D^2 u),
\end{equation}
as shown in \cite{Krylov1987}. Because of this equivalence, the numerical resolution of both equations intrinsically relies on handling complex linear non-divergence operators.

In this work, we exploit this equivalence and propose a novel PINN framework based on the Cord\`{e}s condition(C-PINN) to improve the loss formulation for non-divergence form PDEs. The Cord\`{e}s condition, originally developed in elliptic theory, provides sufficient criteria for well-posedness and regularity by controlling the anisotropy of the coefficient matrix and ensuring near-uniform ellipticity of the operator \cite{Cordes1956}. Motivated by this property, we incorporate the Cord\`{e}s condition into the learning framework. By exploiting the induced structural constraint, the proposed approach improves the conditioning of the optimization problem and leads to a smoother loss landscape. Furthermore, the framework is naturally extended to handle HJB and MA equations, thereby enabling efficient numerical treatment of optimal transport problems within a unified learning-based paradigm \cite{Ruthotto2020}.

The primary objective of this paper is to develop a Cord\`{e}s condition based Physics-Informed Neural Network framework for solving linear non-divergence form and fully nonlinear PDEs, with applications to HJB equations, MA equations, and optimal transport. The main contributions are summarized as follows:
\begin{itemize}
    \item We propose a novel PINN framework based on the Cord\`{e}s condition, which improves the loss formulation and enhances the stability and conditioning of the optimization process for non-divergence form PDEs.
    \item The proposed method is further extended to solve fully nonlinear equations, including HJB and MA equations, demonstrating its effectiveness in handling highly complex nonlinear problems.
    \item The framework is successfully applied to optimal transport problems, enabling accurate approximation of transport maps.
    \item Numerical experiments validate the accuracy, stability and robustness of the proposed method, while also confirming its effectiveness in handling high-dimensional problems.
\end{itemize}

The remainder of the paper is organized as follows. Section 2 introduces the C-PINN framework and analyzes its loss formulation. Section 3 presents training strategies. Section 4 provides numerical experiments for non-divergence equations and extensions to HJB and MA equations. Section 5 focuses on optimal transport applications. Section 6 concludes the paper.


\section{Architecture and methodology of C-PINN}

\subsection{Physics-Informed Neural Networks}
Physics-Informed Neural Networks are a kind of unsupervised learning framework, which can seamlessly integrate physical laws governed by partial differential equations into the learning process of neural networks \cite{Raissi2019}. Consider a general PDE defined on a bounded domain $\Omega \subset \mathbb{R}^d$ with boundary $\partial\Omega$:
\begin{equation}
    \begin{aligned}
        \mathcal{F}[u](\bs x) &= f(\bs x), \quad \bs x \in \Omega,\\
        \mathcal{B}[u](\bs x) &= g(\bs x), \quad \bs x \in \partial\Omega,
    \end{aligned}
\end{equation}
where $\mathcal{F}$ is a differential operator, $\mathcal{B}$ denotes the boundary condition operator, and $u$ is the exact solution. In the PINN framework, a deep neural network $u_\theta$ parameterized by weights and biases $\theta$ is employed to approximate the latent solution $u$. The required partial derivatives of $u_\theta$ with respect to the spatial coordinates are computed exactly and efficiently using automatic differentiation.

The neural network is trained by minimizing a composite loss function that penalizes both the data mismatch at the boundaries and the physical residual in the interior domain. The standard PINN loss function is formulated as:
\begin{equation}
    \boldsymbol{L}(\theta) = \lambda_1 \boldsymbol{L}_{\rm{pde}}(\theta) + \lambda_2 \boldsymbol{L}_{\rm{bc}}(\theta),
\end{equation}
where the PDE residual loss $\boldsymbol{L}_{\rm{pde}}(\theta)$ and the boundary loss $\boldsymbol{L}_{\rm{bc}}(\theta)$ are defined as mean squared errors (MSE) over a set of sampled collocation points:
\begin{equation}
    \begin{aligned}
        \boldsymbol{L}_{\rm{pde}}(\theta) &= \frac{1}{N_{\rm{pde}}} \sum_{i=1}^{N_{\rm{pde}}} |\mathcal{F}[u_\theta](\bs x_{\rm{pde}}^{(i)}) - f(\bs x_{\rm{pde}}^{(i)})|^2, \\
        \boldsymbol{L}_{\rm{bc}}(\theta) &= \frac{1}{N_{\rm{bc}}} \sum_{i=1}^{N_{\rm{bc}}} |\mathcal{B}[u_\theta](\bs x_{\rm{bc}}^{(i)}) - g(\bs x_{\rm{bc}}^{(i)})|^2.
    \end{aligned}
\end{equation}
Here, $\{\bs x_{\rm{pde}}^{(i)}\}_{i=1}^{N_{\rm{pde}}}$ and $\{\bs x_{\rm{bc}}^{(i)}\}_{i=1}^{N_{\rm{bc}}}$ represent the residual and boundary collocation points, respectively, and $\lambda_1, \lambda_2$ are the penalty weights used to balance the interplay between different loss terms.

While standard PINNs have achieved remarkable success in solving various differential equations, directly applying this framework to non-divergence form linear equations or fully nonlinear equations often leads to severe optimization challenges. The complex nonlinearities and high-order derivatives can cause gradient pathologies during backpropagation, resulting in poor convergence and low accuracy. This intrinsic limitation motivates the introduction of the Cord\`{e}s condition to reformulate the PDE residual, which constructs the foundation of our proposed C-PINN method.

\subsection{Cord\`{e}s Condition}
In this section, we introduce the Cord\`{e}s condition, a fundamental structural assumption for non-divergence form elliptic operators. This condition plays a pivotal role in reformulating the original partial differential equation into a strictly coercive form, which subsequently provides the mathematical foundation for our proposed C-PINN framework.

For Eq. \eqref{eq:general_nondiv}, the well-posedness and the spectral properties of the operator $\mathcal{L}$ are predominantly governed by its principal part, $A: D^2 u$. The classical Cord\`{e}s condition imposes a specific constraint on the eigenvalues of this coefficient matrix.

\begin{defn}[Cord\`{e}s Condition]\label{def:cordes_generalized}
    The operator $\mathcal{L}$ is said to satisfy the Cord\`{e}s condition if one of the following two cases holds for some constant $\varepsilon \in (0, 1)$:
    \begin{itemize}
        \item whenever $\bs{b} \equiv \bs{0}$ and $c \equiv 0$, then for almost every $\bs x \in \Omega$,
            \begin{equation} \label{eq:cordes1}
                \frac{\|A(\bs x)\|_F^2}{(\text{tr}(A(\bs x)))^2} \le \frac{1}{d - 1 + \varepsilon}.
            \end{equation}
        \item whenever $\bs{b} \not\equiv \bs{0}$ or $c \not\equiv 0$, there exists a parameter $\lambda > 0$ such that for almost every $\bs x \in \Omega$,
            \begin{equation} \label{eq:cordes2}
                \frac{\|A(\bs x)\|_F^2 + {|\bs{b}(\bs x)|^2}/{2\lambda} + {c^2(\bs x)}/{\lambda^2}}{\left(\text{tr}(A(\bs x)) + {c(\bs x)}/{\lambda}\right)^2} \le \frac{1}{d + \varepsilon},
            \end{equation}
    \end{itemize}
    where $\|A\|_F$ and $\text{tr}(A)$ denote the Frobenius norm and the trace of the matrix $A$, respectively.
\end{defn}
The inequality in Eq. \eqref{eq:cordes1} and Eq. \eqref{eq:cordes2} essentially restricts the dispersion of the eigenvalues of $A(\bs x)$, ensuring that the principal part of the operator does not deviate excessively from the standard Laplacian operator.

The profound theoretical implication of the Cord\`{e}s condition lies in its ability to stably reformulate the PDE. By defining a spatially varying scaling function:
\begin{equation} \label{eq:scaling_function}
    \alpha(\bs x) = \frac{\text{tr}(A(\bs x))}{\|A(\bs x)\|_F^2},
\end{equation}
one can multiply both sides of Eq. \eqref{eq:general_nondiv} by $\alpha(\bs x)$ to obtain the scaled equation $\alpha(\bs x) \mathcal{L} u = \alpha(\bs x) f(\bs x)$. Under the Cord\`{e}s condition, classical PDE theory guarantees that the scaled principal operator $\alpha(\bs x) A(\bs x) : D^2 u$ becomes a small perturbation of the Laplacian. Consequently, the reformulated operator exhibits strong coercivity in the Sobolev space $H^2(\Omega)$.

\begin{remark}
    Although a generalized Cord\`{e}s condition exists for operators with lower-order terms by introducing an auxiliary parameter \cite{Smears2013}, in the practical implementation of C-PINN, we compute the scaling function $\alpha(\bs x)$ relying exclusively on the principal part $A(\bs x)$ \cite{Gilbarg2001}. Mathematically, the lower-order terms represent compact perturbations, and the extreme non-convexity of the optimization landscape is overwhelmingly dominated by the second-order derivatives. By isolating and scaling the highest-order terms, our framework not only restores the strict convexity of the residual functional but also strictly restrains the computational complexity. This targeted preconditioning design circumvents the prohibitive computational overhead of tuning and calculating generalized multipliers for lower-order perturbations across all collocation points, yielding an elegant balance between theoretical rigor and computational scalability.
\end{remark}

From the perspective of deep learning and optimization, directly minimizing the naive residual of Eq.~\eqref{eq:general_nondiv} often yields a highly non-convex loss landscape accompanied by severe gradient pathologies \cite{Krishnapriyan2021,Wang2021}. By contrast, introducing the scaling function $\alpha(\bs x)$ derived from the Cord\`{e}s condition transforms the PDE residual into a functional that is better-conditioned with respect to the highest-order derivatives. This transformation effectively acts as a contraction mapping during training, smoothing the optimization trajectory and enabling stable and accurate approximation of the latent solution.

\subsection{Cord\`{e}s Loss for Linear Non-divergence Equation}
Building upon the theoretical foundation of the Cord\`{e}s condition established in the previous section, we now formulate the modified loss function for training the C-PINN. The primary objective is to embed the properly scaled PDE residual into the neural network's optimization objective, thereby mitigating the gradient pathologies caused by the unconditioned highest-order derivatives.

As established in classical elliptic PDE theory, in Eq. \eqref{eq:general_nondiv}, the well-posedness and regularity of the operator are strictly dominated by its principal second-order part $A(\bs x) : D^2 u$. The lower-order terms merely represent compact perturbations in the Sobolev space $H^2(\Omega)$. From an optimization perspective within the PINN framework, it is precisely this principal part that induces severe non-convexity and gradient pathologies. Therefore, the core strategy to stabilize the training trajectory is to properly precondition the highest-order terms. By leveraging the Cord\`{e}s condition, we can introduce a spatially varying multiplier that forces the ill-conditioned principal operator to behave analogously to the standard Laplacian $\Delta$. This fundamental transformation mathematically guarantees a strict contraction mapping, as formalized in the following theorem.

\begin{lemma} [Cord\`{e}s \cite{Neilan2017}] \label{thm:cordes}
    Let $\Omega \subset \mathbb{R}^n$ be a bounded domain with a sufficiently smooth boundary. Assume the uniformly elliptic coefficient matrix $A(\bs x)$ satisfies the $n$-dimensional Cord\`{e}s condition with parameter $\varepsilon \in (0, 1]$. By selecting the optimal point-wise multiplier $\lambda(\bs x) = {\text{tr}(A)}/{\text{tr}(A^2)}$, the preconditioned residual exhibits a strict contraction mapping for any $u \in H^2(\Omega) \cap H_0^1(\Omega)$:
    \begin{equation} \label{eq:cordes_loss}
        \|\Delta u - \lambda A : D^2 u\|_{L^2(\Omega)} \le \sqrt{1 - \varepsilon} \|\Delta u \|_{L^2(\Omega)}.
    \end{equation}
\end{lemma}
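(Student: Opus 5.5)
The proof reduces to a pointwise matrix inequality, which is then combined with the Miranda--Talenti estimate on convex domains; the convexity of $\Omega$ assumed throughout the paper is what that estimate needs.

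\emph{Pointwise bound.} Fix $\bs x$ and write $A=A(\bs x)$, which is symmetric. With $\lambda=\mathrm{tr}(A)/\mathrm{tr}(A^2)$ and $\mathrm{tr}(A^2)=\|A\|_F^2$, note that $\Delta u - \lambda A:D^2u = (I-\lambda A):D^2u$. Cauchy--Schwarz in the Frobenius inner product gives
\begin{equation*}
|(I-\lambda A):D^2u|\le \|I-\lambda A\|_F\,|D^2u|.
\end{equation*}
We then compute
\begin{equation*}
\|I-\lambda A\|_F^2 = n - 2\lambda\,\mathrm{tr}(A) + \lambda^2\|A\|_F^2 = n - \frac{(\mathrm{tr}A)^2}{\|A\|_F^2}.
\end{equation*}
This value is the minimum over $\lambda$ of the quadratic, which explains why this multiplier is the ``optimal'' one. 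The Cord\`{e}s condition \eqref{eq:cordes1}, with $d=n$, says $(\mathrm{tr}A)^2/\|A\|_F^2\ge n-1+\varepsilon$. Hence $\|I-\lambda A\|_F^2\le 1-\varepsilon$, and pointwise
\begin{equation*}
|\Delta u-\lambda A:D^2u|\le\sqrt{1-\varepsilon}\,|D^2u|
\end{equation*}
almost everywhere. Squaring and integrating over $\Omega$ gives
\begin{equation*}
\|\Delta u-\lambda A:D^2u\|_{L^2}\le\sqrt{1-\varepsilon}\,\|D^2u\|_{L^2}.
\end{equation*}

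\emph{Passing from $D^2u$ to $\Delta u$.} This is the main obstacle. Pointwise, $|D^2u|$ can exceed $|\Delta u|$, so the bound only closes after integration, through the Miranda--Talenti inequality
\begin{equation*}
\|D^2u\|_{L^2(\Omega)}\le\|\Delta u\|_{L^2(\Omega)}, \qquad u\in H^2(\Omega)\cap H^1_0(\Omega),
\end{equation*}
valid on bounded convex domains.

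To justify it, I would first take $u$ smooth up to the boundary with $u=0$ on $\partial\Omega$ and integrate by parts twice:
\begin{equation*}
\int_\Omega|\Delta u|^2-|D^2u|^2 = \int_{\partial\Omega}\Bigl(\Delta u\,\partial_\nu u - \tfrac12\partial_\nu|\nabla u|^2\Bigr).
\end{equation*}
Since $u=0$ on $\partial\Omega$, the tangential gradient vanishes and $\nabla u=(\partial_\nu u)\nu$ there. The boundary integrand then reduces to $(n-1)H\,|\partial_\nu u|^2$, where $H$ is the mean curvature, which is nonnegative for convex $\Omega$. So the boundary term is nonnegative and the inequality holds for smooth $u$. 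For general $u$, approximate $\Omega$ from inside by smooth convex domains and $u$ by smooth functions, then pass to the limit; this is the standard Grisvard argument.

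The hypothesis as printed, ``sufficiently smooth boundary'', does not by itself give the sign of the curvature term. I would therefore invoke the paper's standing assumption that $\Omega$ is convex (or else assume $H^2$-regularity with constant $1$). Combining the two displayed estimates yields \eqref{eq:cordes_loss}.
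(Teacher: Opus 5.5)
The paper gives no proof of this lemma; it is quoted from the literature. Your argument is the standard one from that literature (e.g. Smears and S\"{u}li, whom the paper also cites), and it is correct. It combines the pointwise identity $\|I-\lambda A\|_F^2=n-(\mathrm{tr}A)^2/\|A\|_F^2\le 1-\varepsilon$, which follows from \eqref{eq:cordes1}, with the Miranda--Talenti estimate on a convex domain.

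Your caveat about the hypothesis is more than a formality: on smooth nonconvex domains the printed statement is false. On the annulus $1<|\bs x|<R$, take $A=a_r\,\bs e_r\bs e_r^{T}+a_\theta\,\bs e_\theta\bs e_\theta^{T}$ with constants $a_r\neq a_\theta$. This matrix is smooth, uniformly elliptic, and satisfies the two-dimensional Cord\`{e}s condition. Then $\log|\bs x|$ is harmonic, while $\lambda A:D^2\log|\bs x|=\lambda(a_\theta-a_r)|\bs x|^{-2}\neq0$. Modify $\log|\bs x|$ radially on $\sqrt R<|\bs x|<R$ so that it vanishes on $|\bs x|=R$, for instance by switching smoothly to another radial harmonic function $a+b\log|\bs x|$. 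This costs only $\|\Delta u\|_{L^2}^2=O(R^{-1})$. For the resulting $u\in H^2(\Omega)\cap H^1_0(\Omega)$, the left side of \eqref{eq:cordes_loss} stays bounded below while the right side tends to zero. Convexity is therefore the correct hypothesis; the paper assumes it in its introduction, and all its test domains are convex.

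The one step worth stating more carefully is the passage to nonsmooth convex $\Omega$. Rather than approximating $u$ directly, proceed as follows:
\begin{enumerate}
\item Solve $\Delta u_m=\Delta u$ in smooth convex domains $\Omega_m\uparrow\Omega$, with $u_m\in H^2(\Omega_m)\cap H^1_0(\Omega_m)$.
\item Apply the smooth-domain inequality, extended from smooth functions to $H^2\cap H^1_0$ by density.
\item Identify the weak limit with $u$ by uniqueness of the Dirichlet problem in $H^1_0(\Omega)$, and conclude by weak lower semicontinuity on compact subsets.
\end{enumerate}
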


\begin{theorem}\label{thm:nn_cordes}
    Assume the neural network architecture $u_\theta \in C^2(\Omega)$, and satisfies the homogeneous Dirichlet boundary condition exactly via a hard-constraint ansatz $u_\theta(\bs x) = \mathcal{D}(\bs x)\mathcal{N}_\theta(\bs x)$ where $\mathcal{D}(\bs x) = 0$ on $\partial\Omega$. If the coefficient matrix $A(\bs x)$ satisfies the assumptions in Lemma \ref{thm:cordes}, then the preconditioned neural network approximation satisfies the strict contraction mapping:
    \begin{equation} \label{eq:nn_cordes_loss}
        \|\Delta u_\theta - \lambda A : D^2 u_\theta\|_{L^2(\Omega)} \le \sqrt{1 - \varepsilon} \|\Delta u_\theta \|_{L^2(\Omega)}.
    \end{equation}
\end{theorem}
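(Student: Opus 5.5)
The plan is to reduce the theorem to Lemma \ref{thm:cordes}: show that the hard-constrained network $u_\theta$ belongs to $H^2(\Omega)\cap H_0^1(\Omega)$, and then apply the lemma with $u=u_\theta$. I would also give a pointwise version, since the inequality actually holds at every point, before integration.

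\textbf{Step 1 (membership).} Since $\Omega$ is bounded, I would assume $u_\theta\in C^2(\overline{\Omega})$. This is automatic for smooth activations such as $\tanh$, provided the distance-like function $\mathcal{D}$ is $C^2$ up to the boundary. Then $u_\theta$ and all its derivatives up to order two are bounded, so $u_\theta\in H^2(\Omega)$. Because $\mathcal{D}=0$ on $\partial\Omega$, the trace of $u_\theta$ vanishes, and with the smooth boundary from Lemma \ref{thm:cordes} this gives $u_\theta\in H_0^1(\Omega)$. The lemma then yields \eqref{eq:nn_cordes_loss} directly.

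\textbf{Step 2 (pointwise check, making the argument self-contained).} Fix $\bs x$ and write $M=D^2u_\theta(\bs x)$, $I$ for the identity, and $\lambda=\text{tr}(A)/\|A\|_F^2$. Then
\[
\Delta u_\theta-\lambda A:M=(I-\lambda A):M .
\]
Next decompose $M=\frac{\text{tr}M}{d}I+M_0$ with $\text{tr}M_0=0$. The specific choice of $\lambda$ gives $(I-\lambda A):I=d-\lambda\,\text{tr}A$. Using Cauchy--Schwarz on the traceless part, or more directly minimizing over scalar multiples, one obtains
\[
|(I-\lambda A):M|^2\le \|I-\lambda A\|_F^2\,\|M\|_F^2,\qquad \|I-\lambda A\|_F^2=d-\frac{(\text{tr}A)^2}{\|A\|_F^2}\le 1-\varepsilon,
\]
where the last inequality is exactly \eqref{eq:cordes1}. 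Integrating gives
\[
\|\Delta u_\theta-\lambda A:D^2u_\theta\|_{L^2}\le\sqrt{1-\varepsilon}\,\|D^2u_\theta\|_{L^2}.
\]

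\textbf{Step 3 (Miranda--Talenti).} It remains to replace $\|D^2u_\theta\|_{L^2}$ by $\|\Delta u_\theta\|_{L^2}$. I would use the Miranda--Talenti inequality $\|D^2v\|_{L^2}\le\|\Delta v\|_{L^2}$ for $v\in H^2\cap H_0^1$ on convex domains, which is the standing assumption in the introduction. This inequality is established first for smooth functions vanishing on the boundary by integrating by parts twice; the boundary term involves the mean curvature and has a favorable sign under convexity. This is the same ingredient underlying Lemma \ref{thm:cordes}, and Step 1 is what makes it applicable to $u_\theta$.

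The main obstacle is Step 1 together with the domain hypothesis. One must ensure that the ansatz actually gives $H^2$ regularity up to the boundary; if $\mathcal{D}$ is only Lipschitz (for example $\min_i$ of distances), $D^2u_\theta$ may fail to be in $L^2$ near corners. I would therefore state explicitly that $\mathcal{D}\in C^2(\overline\Omega)$ and that $\Omega$ is convex (or smooth), so that the lemma, or Miranda--Talenti, is applicable.
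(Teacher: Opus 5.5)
Your Step 1 is the paper's own proof: smoothness of the activation and of $\mathcal{D}$ up to the boundary gives $u_\theta\in C^2(\overline\Omega)\subset H^2(\Omega)$, the vanishing of $\mathcal{D}$ on $\partial\Omega$ gives $u_\theta\in H_0^1(\Omega)$, and Lemma~\ref{thm:cordes} is then applied with $u=u_\theta$. Your Steps 2--3 go beyond the paper, which only cites the lemma, by re-deriving it from $\|I-\lambda A\|_F^2=d-(\text{tr}A)^2/\|A\|_F^2\le 1-\varepsilon$ and the Miranda--Talenti inequality; this is correct, with two caveats: only the bound by $|D^2u_\theta|$ holds pointwise, not the inequality with $|\Delta u_\theta|$ on the right as your opening sentence suggests, and it is convexity of $\Omega$, not smoothness of $\partial\Omega$, that gives the constant one in $\|D^2u_\theta\|_{L^2}\le\|\Delta u_\theta\|_{L^2}$.
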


\begin{proof}
    To establish the contraction inequality for $u_\theta$, it is sufficient to prove that the parameterized function $u_\theta(\boldsymbol{x})$ inherently resides within the Sobolev space $H^2(\Omega) \cap H_0^1(\Omega)$ required by Lemma \ref{thm:cordes}.

    First, we examine the regularity of the standard feed-forward neural network $\mathcal{N}_\theta: \mathbb{R}^n \to \mathbb{R}$. The network is composed of successive affine transformations and non-linear activations. We employ the SiLU activation function, defined as
    $$ \sigma(z) = z \cdot \varsigma(z) = \frac{z}{1 + e^{-z}}. $$
    Since the logistic sigmoid function $\varsigma(z)$ is infinitely differentiable on $\mathbb{R}$, it follows directly that $\sigma(z) \in C^\infty(\mathbb{R})$. Because finite compositions and linear combinations of $C^\infty$ functions preserve infinite differentiability, the global neural network output inherently satisfies $\mathcal{N}_\theta \in C^\infty(\bar{\Omega})$.

    Next, we consider the hard-constraint ansatz $u_\theta(\boldsymbol{x}) = \mathcal{D}(\boldsymbol{x}) \mathcal{N}_\theta(\boldsymbol{x})$. Given the assumption that the distance function is sufficiently smooth, $\mathcal{D} \in C^k(\bar{\Omega})$ with $k \ge 2$, the product rule of classical derivatives guarantees that the composite approximation $u_\theta \in C^2(\bar{\Omega})$.

    For a bounded domain $\Omega$ with a Lipschitz continuous boundary, we have the standard continuous embedding $C^2(\bar{\Omega}) \subset H^2(\Omega)$, which implies that all up to second-order weak derivatives of $u_\theta$ are square-integrable.

    Furthermore, by construction, $\mathcal{D}(\boldsymbol{x}) = 0$ for all $\boldsymbol{x} \in \partial\Omega$. Therefore, the trace of $u_\theta$ on the boundary vanishes:
    $$ \gamma_0 u_\theta = u_\theta \big|_{\partial\Omega} = \left( \mathcal{D} \cdot \mathcal{N}_\theta \right) \big|_{\partial\Omega} = 0. $$
    By the trace theorem, functions in $H^1(\Omega)$ with zero trace uniquely belong to the subspace $H_0^1(\Omega)$.

    Combining these regularity results yields $u_\theta \in H^2(\Omega) \cap H_0^1(\Omega)$. Since Lemma \ref{thm:cordes} holds universally for any test function $u \in H^2(\Omega) \cap H_0^1(\Omega)$, substituting $u$ with the neural network approximation $u_\theta$ validates the inequality \eqref{eq:nn_cordes_loss}.
\end{proof}

\begin{remark}
    Theorem \ref{thm:nn_cordes} mathematically justifies the integration of the Cord\`{e}s multiplier $\lambda(\bs x)$ into the PINN loss function. It guarantees that the preconditioned linear operator $\lambda A : D^2$ acts as a strict contraction towards the Laplacian operator $\Delta$ uniformly across the parameterized function space, thereby effectively bounding the spectrum of the deeply coupled nonlinear loss landscape and preventing gradient pathologies during backpropagation.
\end{remark}

The theoretical guarantee of this strict contraction mapping serves as the cornerstone for our proposed C-PINN. By embedding the optimal multiplier $\lambda(\bs x)$ directly into the neural network's empirical risk minimization formulation, we fundamentally reshape the geometry of the optimization landscape. Rather than navigating the steep cliffs and singular valleys characteristic of standard PINNs, the network now descends through a well-conditioned, strictly shrinking basin. This mathematically derived regularization intrinsically prevents gradient explosions and ensures robust convergence, establishing a rigorous bridge between classical functional analysis and deep learning optimization.

Hence, for the linear non-divergence equations, we can define the Cord\`{e}s loss as is followed:
\begin{equation} \label{eq:closs_for_nondiv}
    \boldsymbol{L}_{\text{Cord\`{e}s}}(\theta) = \frac{1}{N_{\text{int}}} \sum_{i=1}^{N_{\text{int}}} \left| \frac{\text{tr}(A)}{\text{tr}(A^2)+\delta} \Big( \mathcal{L}u_{\theta}(\bs x_i) - f(\bs x_i) \Big) \right|^2,
\end{equation}
where $\{ \bs x_i\} _ {i=1} ^ {N_\text{int}}$ are the sampled points in the domain $\Omega$, and $N_\text{int}$ is the number of interior sampled points, and $\delta > 0$ is a small threshold ensuring numerical stability.

\section{Dual-Loop C-PINN for Nonlinear PDEs: A Unified Newton Framework}
While the static C-PINN framework demonstrates unconditional stability for linear non-divergence equations, extending this success to nonlinear PDEs, such as the HJB and MA equations, poses a fundamental theoretical challenge. Directly minimizing highly non-convex or non-differentiable operators often leads to gradient explosion in deep learning optimizers. To bridge this gap, we introduce a successive linearization paradigm. By employing an outer-loop iterative scheme to locally project the fully nonlinear operator into a sequence of surrogate linear PDEs, we seamlessly inject the Cord\`{e}s condition into each step, thereby unifying the treatment of diverse fully nonlinear equations under a single robust neural architecture.

\subsection{Solving the Hamilton-Jacobi-Bellman Equation via Semi-smooth Newton Iteration}
The HJB equations stands as the fundamental governing equation in stochastic optimal control. Its primary challenge stems from the highly non-convex and non-differentiable $\sup$ or $\inf$ operators over the control space, which invalidate classical smooth linearization techniques. We consider the general stationary HJB equation as Eq. \eqref{eq:HJB}, where the non-differentiable nature of the $\sup$ operator necessitates the use of the Semi-smooth Newton method.

In the $k$-th outer loop, given the frozen state $u^{(k)}$, we evaluate the Clarke generalized Jacobian of the $\max$ operator. This analytically corresponds to identifying the active control branch
\begin{equation}
    \alpha^* = \arg\max_{\alpha \in \varLambda} \left( \mathcal{L}^\alpha u^{(k)} - f^\alpha \right).
\end{equation}
By selecting this active policy $\alpha^*$, the generalized derivative is precisely $\mathcal{L}^{\alpha^*}$. The Semismooth Newton step is then formulated as
\begin{equation}
    \Big( \mathcal{L}^{\alpha^*} u^{(k)} - f^{\alpha^*} \Big) + \mathcal{L}^{\alpha^*} \Big( u^{(k+1)} - u^{(k)} \Big) = 0.
\end{equation}
Then, we notice that the previous state $\mathcal{L}^{\alpha^*} u^{(k)}$ is perfectly canceled, projecting the fully nonlinear HJB equation into a surrogate linear non-divergence form equation for the inner-loop neural solver
\begin{equation}
    A^{\alpha^*}(\bs x) : D^2u^{(k+1)} + \mathbf{b}^{\alpha^*}(\bs x) \cdot \nabla u^{(k+1)} - c^{\alpha^*}  u^{(k+1)} = f^{\alpha^*}(\bs x).
\end{equation}
To unconditionally stabilize the subsequent neural network training, we construct the static Cord\`{e}s preconditioner exclusively based on the frozen active diffusion matrix $A^{\alpha^*}$.

Ultimately, we obtain the Cord\`{e}s loss function for HJB equations in Semi-smooth Newton iteration:
\begin{equation}
    \boldsymbol{L}_{\text{Cord\`{e}s}}^{(k)}(\theta) = \frac{1}{N_{\text{int}}} \sum_{i=1}^{N_{\text{int}}} \left| \frac{\text{tr}(A^{(k)})}{\text{tr}((A^{(k)})^2) + \delta} \left( \mathcal{L}^{\alpha^*} u_\theta ^ {(k)}(\bs x_i) - f^{\alpha^*}(\bs x_i) \right) \right|^2.
\end{equation}

\subsection{Solving the Monge-Amp\`{e}re Equation via Smooth Newton-Picard Linearization}
While the HJB equation addresses non-smooth singularities, MA equation represents the pinnacle of geometric fully nonlinear PDEs, driven by the smooth determinant operator. When the differential operator is strictly differentiable, our generalized outer-loop framework elegantly degenerates into the classical smooth Newton-Picard linearization.

We consider the standard MA equation $F(D^2u) = \det(D^2u) - f = 0$. In the $k$-th outer loop, instead of selecting an active branch via generalized Jacobians, we compute the exact Fr\'echet derivative of the determinant operator at the frozen state $u^{(k)}$. According to Jacobi's formula, the derivative of a determinant is given by the double-dot product with its cofactor matrix. Thus, the Newton expansion yields
\begin{equation}
    \det(D^2u^{(k)}) - f + \text{cof}(D^2u^{(k)}) : \Big( D^2u^{(k+1)} - D^2u^{(k)} \Big) = 0.
\end{equation}
By shifting the known quantities to the right-hand side, we obtain the surrogate linear non-divergence equation
\begin{equation}
    A^{(k)} : D^2u^{(k+1)} = \tilde{f}^{(k)},
\end{equation}
where the frozen state-dependent coefficient matrix is defined as $A^{(k)} = \text{cof}(D^2u^{(k)})$, and the linearized equivalent source term is $\tilde{f}^{(k)} = f - \det(D^2u^{(k)}) + A^{(k)} : D^2u^{(k)}$.

Consistent with the mechanism in Section 3.1, the inner-loop neural solver optimizes this static surrogate equation. We obtain the Cord\`{e}s loss function for MA equations in smooth Newton-Picard iteration:
\begin{equation}
    \boldsymbol{L}_{\text{Cord\`{e}s}}^{(k)}(\theta) = \frac{1}{N_{\text{int}}} \sum_{i=1}^{N_{\text{int}}} \left| \frac{\text{tr}(A^{(k)})}{\text{tr}((A^{(k)})^2) + \delta} \left( A^{(k)}(\bs x_i):D^2 u_\theta(\bs x_i) - \tilde{f}^{(k)}(\bs x_i) \right) \right|^2.
\end{equation}

This architectural duality underscores a profound theoretical unification: whether handling the non-smooth combinatorial switching in HJB or the smooth geometric curvature in MA, the proposed dual-loop framework consistently reduces the fully nonlinear operator into a surrogate linear PDE, which is subsequently resolved by the unconditionally stable C-PINN.

Furthermore, the loss landscape derived from the HJB equation is visualized to illustrate that the proposed loss function yields a smoother and more well-conditioned optimization landscape compared to the original formulation, as shown in Fig. \ref{fig:loss landscape}. Specifically, the landscapes are plotted by perturbing the network parameters along two random, filter-normalized directions around the converged local minimum. This visualization confirms that the Cord\`{e}s preconditioner structurally eradicates extreme local curvatures and widens the basin of attraction, which fundamentally prevents the optimizer from being trapped in pathological ravines during the entire training trajectory.
\begin{figure}[H]
    \centering
    \includegraphics[width=0.55\linewidth]{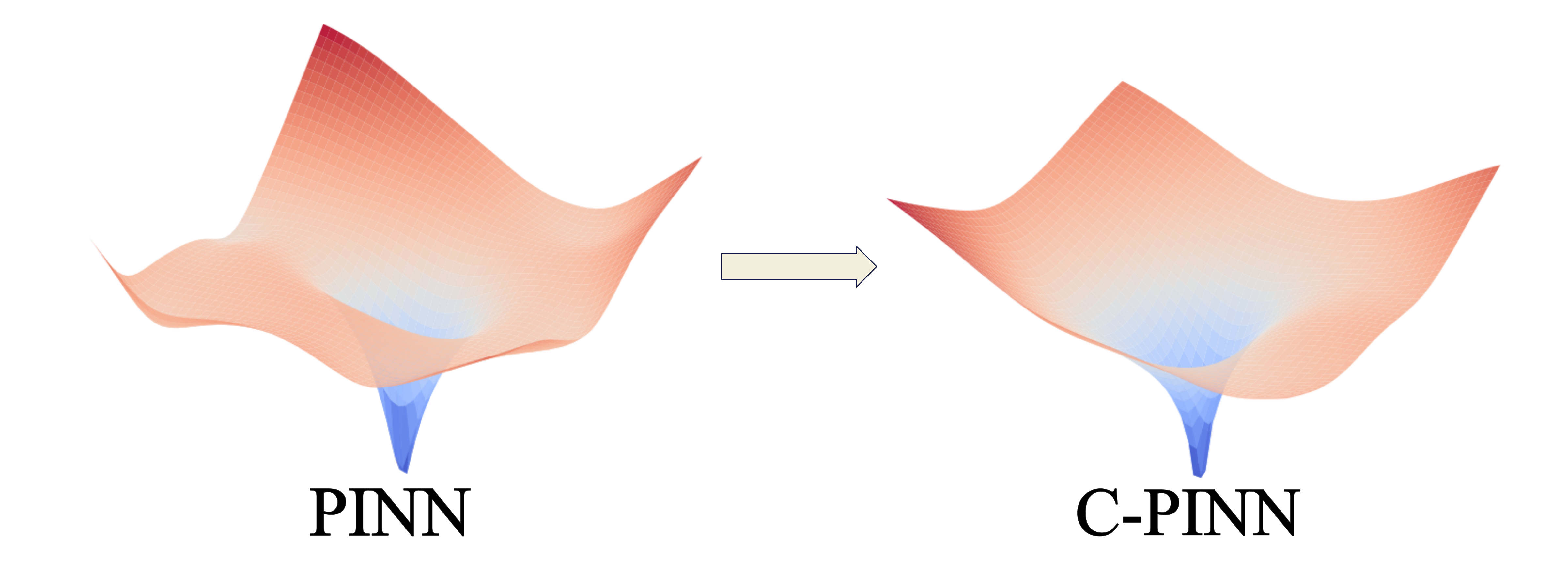}
    \vspace{-0.2in}
    \caption{The loss landscape of the HJB equation task}
    \label{fig:loss landscape}
\end{figure}
\vspace{-0.1in}
With the theoretical framework and the modified objective function of C-PINN now fully established, we proceed to evaluate its effectiveness, accuracy, and robustness through a series of comprehensive numerical experiments in the following section.

A comprehensive visual summary of the proposed framework is illustrated in Fig. \ref{fig:Dual-Loop C-PINN Framework}. The flowchart delineates the transition from the global warm-up phase to the subsequent dual-loop iteration, highlighting the architectural symmetry in handling HJB and MA equations.
\vspace{-0.1in}
\begin{figure}[H]
    \centering
    \includegraphics[width=0.9\linewidth]{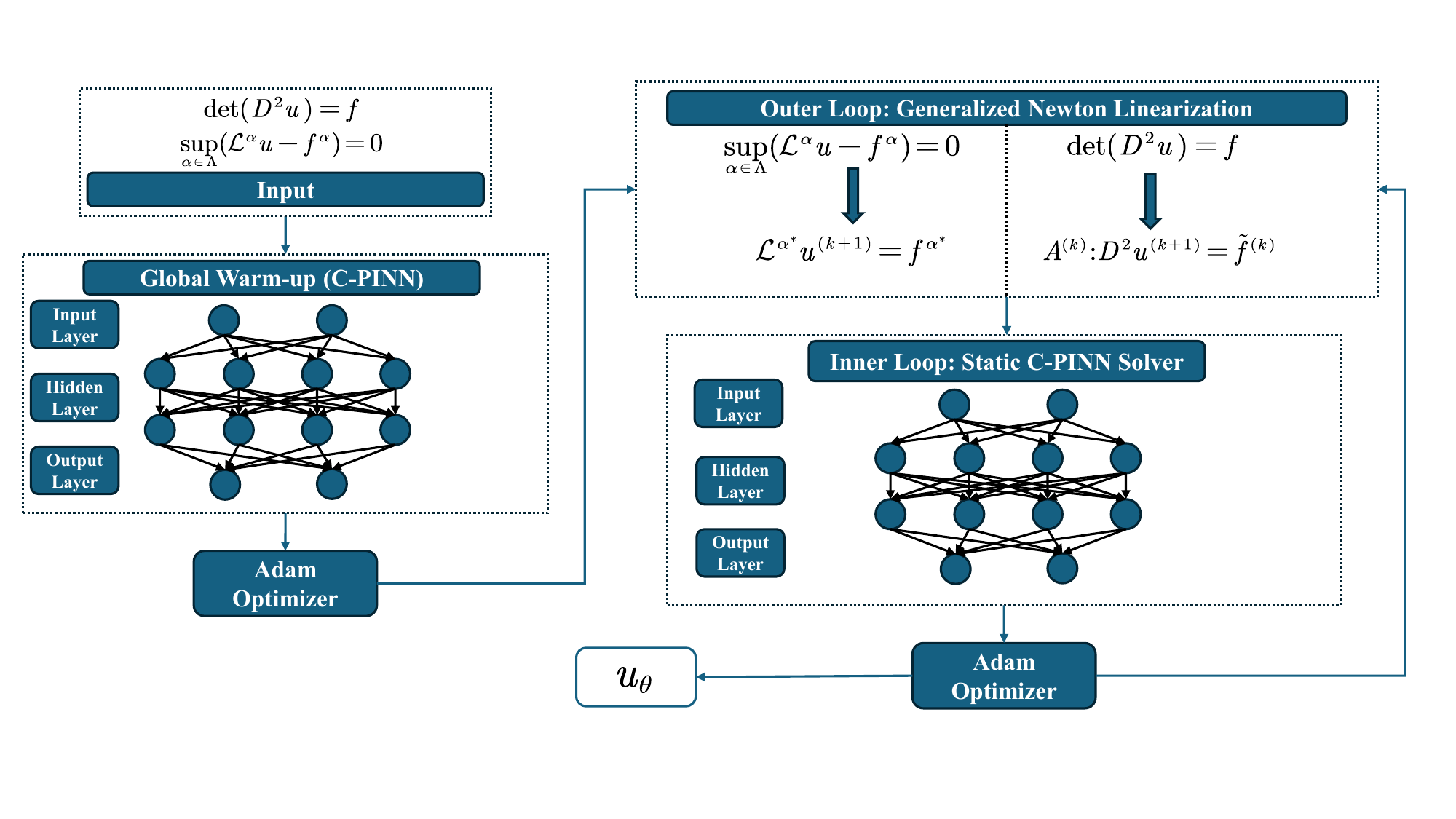}
    \vspace{-0.5in}
    \caption{Dual-Loop C-PINN framework}
    \label{fig:Dual-Loop C-PINN Framework}
\end{figure}

\begin{remark}
We acknowledge that providing a rigorous mathematical proof for the global convergence and stability of the coupled Adam-Newton optimization within the highly non-convex parameter space of deep neural networks remains a formidable open challenge in the current SciML literature. Nevertheless, the efficacy of the proposed scheme is strongly supported by empirical evidence. As will be demonstrated through the numerical experiments, the inclusion of the Cord\`{e}s multiplier plays a pivotal role in the optimization process. Comparative analyses reveal that our C-PINN strategy exhibits a significantly accelerated convergence rate and enhanced stability when evaluated against the PINN lacking the Cord\`{e}s preconditioning with Newton iteration. This underscores the practical necessity of transforming the non-smooth generalized Jacobian into a well-conditioned operator via the Cord\`{e}s multiplier.
\end{remark}

\section{Numerical experiments}
\subsection{Training Strategy and Optimization Metrics}
In this section, we present numerical experiments to verify the effect of the C-PINN by solving various linear non-divergence equations,  Hamilton-Jacobi-Bellman equations and Monge-Amp\`{e}re equations. Additionally, we compare the performance of C-PINN and PINN in these tasks.

Instead of employing a conventional composite loss with empirical penalty weights, we enforce the boundary conditions exactly by construction through a hard-constraint ansatz
\begin{equation}
    u_\theta(\bs{x}) = \mathcal{D}(\bs{x})\mathcal{N}_\theta(\bs{x}) = 0, \quad \bs{x} \in \partial \Omega,
\end{equation}
where $\mathcal{D}(\boldsymbol{x})$ denotes a smooth distance function designed to vanish identically on the boundary $\partial\Omega$, thereby enforcing the Dirichlet constraints exactly by construction. For the sake of brevity, the explicit analytical expressions of $\mathcal{D}(\boldsymbol{x})$ for all specific computational domains investigated in this work are systematically detailed in appendix. Consequently, the boundary loss term is completely eliminated from the optimization objective. The training process is thus reduced to solely minimizing the preconditioned interior residual
\begin{equation} \label{eq:loss}
    \boldsymbol{L}(\theta) = \boldsymbol{L}_{\text{Cord\`{e}s}}(\theta),
\end{equation}
where $\boldsymbol{L}_{\text{Cord\`{e}s}}(\theta)$ represents the Cord\`{e}s loss defined by the above sections. By employing this mathematically grounded preconditioning mechanism, the gradient dynamics are inherently stabilized during the optimization process. To minimize Eq. \eqref{eq:loss}, we use Adam optimization in \cite{Kingma2014} with learning rate $\eta = 3 \times 10^{-4}$.

To evaluate the accuracy, we compute two error metrics: the $l^\infty$ error, defined as the largest absolute difference between the numerical and exact solutions across all evaluating points, and the $l^2$ error, defined as the root mean square of the absolute differences across all points. The corresponding formulas are
\begin{equation}
    \begin{aligned}
        \| u - u_{\theta} \|_{l^2} &= \left( \frac{1}{N} \sum_{i=1}^N | u(\bs x_i) - u_{\theta}(\bs x_i) |^2 \right)^{1/2},\\
        \| u - u_{\theta} \|_{l^\infty} &= \sup_{1 \leq i \leq N} |u(\bs x_i) - u_{\theta}(\bs x_i)|,
    \end{aligned}
\end{equation}
where $u_\theta$ denotes the numerical solution obtained by the neural network and $u$ represents the exact solution.

To rigorously and quantitatively analyze the smoothing effect of the Cord\`{e}s preconditioner on the highly non-convex functional landscape, we introduce two dynamical metrics evaluated along the training trajectory: the global gradient norm $\|\nabla_\theta \boldsymbol{L}\|_2$ and the trajectory sharpness $\sigma_{\max}$ Proxy.

Specifically, the global gradient norm represents the first-order derivative information, reflecting the slope or velocity of the optimization path. It evaluates the $L^2$ norm of the loss gradient with respect to all trainable network parameters $\theta$ at iteration $t$. Geometrically, a smoothly decaying $\|\nabla_\theta \boldsymbol{L}\|_2$ indicates that the optimizer is stably converging towards a flat stationary point, whereas large and chaotic oscillations imply that the gradient flow is trapped in pathological ravines.

Furthermore, to evaluate the second-order geometric properties, we introduce the $\sigma_{\max}$ Proxy to quantify the curvature or inherent stiffness of the local landscape \cite{Cohen2021}. Since computing the exact Hessian matrix of the PDE residual is computationally prohibitive, we follow the Edge of Stability theory and adopt the local Lipschitz constant along the optimization path as an efficient proxy for the maximum singular value of the Hessian. It is defined as
\begin{equation}
    \sigma_{\max} \text{Proxy} = \frac{\|\nabla_{\theta} \boldsymbol{L}(\theta_t) - \nabla_{\theta} \boldsymbol{L}(\theta_{t-1})\|_2}{\|\theta_t - \theta_{t-1}\|_2 }.
\end{equation}
A larger $\sigma_{\max}$ Proxy implies an extremely sharp and irregular functional topology, which forces the optimization to become highly unstable or even diverge. Conversely, a smaller and bounded value demonstrates that the landscape has been effectively smoothed, allowing the optimizer to descend safely.

All experiments were implemented within a consistent training and evaluation framework, and the computations were carried out on a workstation with an NVIDIA RTX4090(24GB), Intel(R) Xeon(R) Gold 6430 CPU, using Python3.10 with PyTorch2.3.0 and CUDA12.4.1 on Ubuntu22.04.

\subsection{Example 4.1 Diffusion-Dominated Elliptic Equation in Non-divergence Form}
We consider the non-divergence equation:
\begin{equation}\label{eq:ex1}
    \begin{cases}
      \mathcal{L} u = -A:D^2 u =f  , \ & \text{in} \ \Omega, \\
      u = 0  , \ & \text{on} \ \partial \Omega,
    \end{cases}
\end{equation}
where $\Omega = (-2,2) \times (-2,2) $, and
\begin{equation}
    A\left( x_1,x_2 \right) =\left( \begin{matrix}
        \left( 2x_1-x_2 \right) ^{1/3}+4e^{2-x_1}&		\frac{1}{2}\sin \left( 10x_1x_2 \right) -\frac{1}{2}\left( x_1+2 \right) ^{1/2}\\
        \frac{1}{2}\sin \left( 10x_1x_2 \right) -\frac{1}{2}\left( x_1+2 \right) ^{1/2}&		|x_2-2x_1|^{1/4}+3\\
  \end{matrix} \right).
\end{equation}
\indent We begin with the case where the source term $f$ is chosen to match the smooth solution $u\left( x_1,x_2 \right) =\frac{1}{6}|x_1|^3\cos \left( x_2 \right)$. In this experiment, we randomly sample 10,000 interior points and 1,000 boundary points, and the solution is evaluated on a uniform $200 \times 200$ grid in the domain. Unless otherwise specified, the same experimental setup is adopted for all experiments.\\
\indent In the following, rather than comparing errors at fixed iterations, we perform a rigorous time-to-solution analysis. Table \ref{tab:ex1} details the computational cost (computed every 10 epochs) required for both methods to achieve specific $l^2$ error milestones for the smooth case, with the resulting C-PINN solution visualized in Fig. \ref{fig:ex1}. It is clearly evident that C-PINN dramatically outperforms the standard PINN in computational efficiency, easily penetrating high-accuracy milestones where the standard PINN stagnates. This superiority in convergence speed and ultimate accuracy is physically corroborated by the optimization dynamics shown in Fig. \ref{fig:gn1}, which demonstrates that C-PINN significantly enhances landscape smoothness and gradient stability.

\vspace{-0.05in}

\begin{table}[htbp]
    \centering
    \caption{Computational cost and actual errors for C-PINN and PINN to reach target $l^2$ accuracy milestones for the smooth case of \eqref{eq:ex1}.}
    \label{tab:ex1}
    \begin{tabular}{c l | r r c c}
    \toprule
    \multirow{2}{*}{\shortstack{Target \\ $l^2$ Error}} & \multirow{2}{*}{Method} & \multicolumn{2}{c}{Computational Cost} & \multicolumn{2}{c}{Actual Errors} \\
    \cmidrule(lr){3-4} \cmidrule(lr){5-6}

    & & Epochs & Time (s) & \makebox[2.2cm][c]{$l^2$ Error} & \makebox[2.2cm][c]{$l^ \infty$ Error} \\
    \midrule

    \multirow{2}{*}{1.0e-03}
    & C-PINN & \cellcolor{blue!10}80 & \cellcolor{blue!10}1.27 & 6.60e-04 & 1.74e-03 \\
    & PINN   & 300 & 5.02 & 9.78e-04 & 2.59e-03 \\
    \midrule

    \multirow{2}{*}{1.0e-04}
    & C-PINN & \cellcolor{blue!10}210 & \cellcolor{blue!10}3.34 & 9.80e-05 & 2.48e-04 \\
    & PINN   & 1590 & 25.93 & 9.98e-05 & 2.73e-04 \\
    \midrule

    \multirow{2}{*}{1.0e-05}
    & C-PINN & \cellcolor{blue!10}1620 & \cellcolor{blue!10}25.53 & 9.84e-06 & 3.21e-05 \\
    & PINN   & 7710 & 126.09 & 9.96e-06 & 2.85e-05 \\
    \midrule

    \multirow{2}{*}{1.0e-06}
    & C-PINN & \cellcolor{blue!10}6300 & \cellcolor{blue!10}99.39 & 9.95e-07 & 2.91e-06\\
    & PINN   & \cellcolor{gray!10}- & \cellcolor{gray!10}- & (2.92e-05) & (1.40e-05) \\

    \bottomrule
    \multicolumn{6}{l}{\small \textit{Note:} "-" indicates the method failed to reach the target accuracy within 40,000 epochs.} \\
    \multicolumn{6}{l}{\small \textit{\phantom{Note: }} Values in parentheses represent the final errors at the maximum epochs.} \\

    \end{tabular}
\end{table}

\vspace{-0.2in}
\begin{figure}[H]
    \centering
    \includegraphics[width=0.8\linewidth]{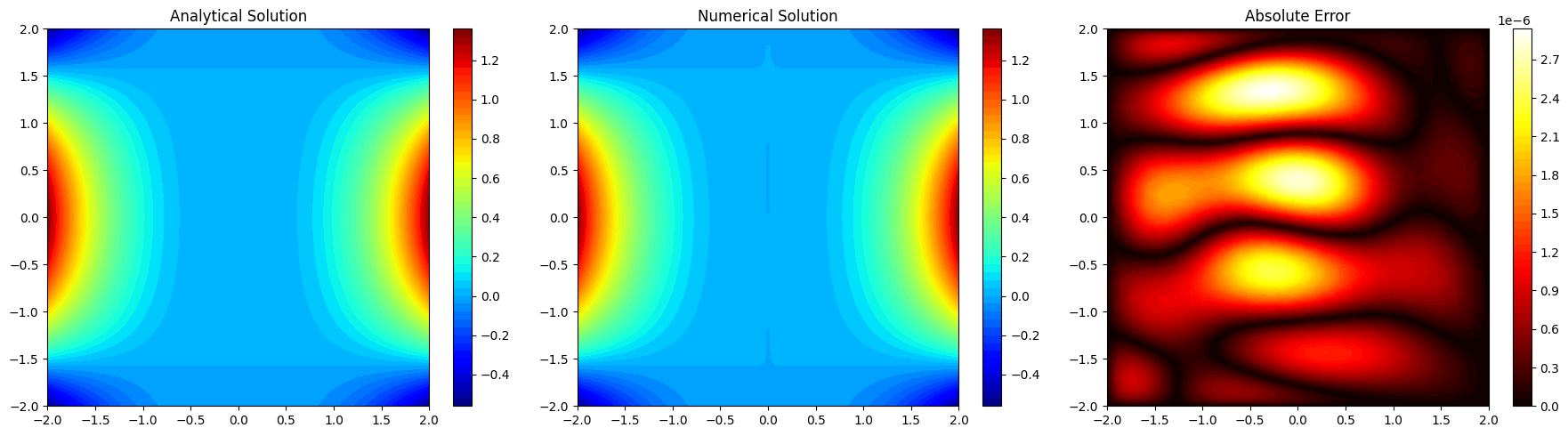}
    \caption{Results for the smooth case of \eqref{eq:ex1}. From left to right: exact solution, numerical solution, and absolute error.}
    \label{fig:ex1}
\end{figure}
\vspace{-0.2in}
\begin{figure}[H]
    \centering
    \includegraphics[width=0.8\linewidth]{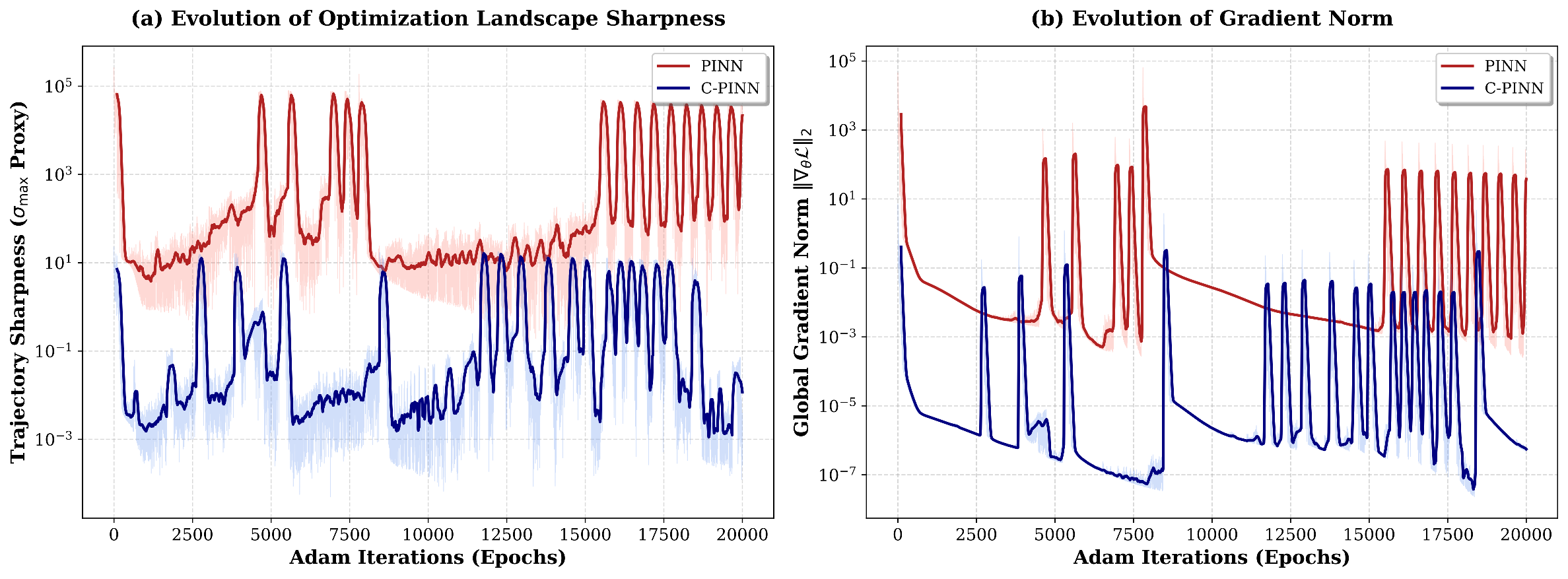}
    \caption{Optimization dynamics comparison: C-PINN vs. PINN for the smooth case of \eqref{eq:ex1}.}
    \label{fig:gn1}
\end{figure}

After that, we consider the weakly singular case with circular boundary domain. The source term $f$ is chosen to match the exact solution $u\left( x_1,x_2 \right) = (x_1 - x_2)^{8/3}$, where $\Omega = B_2(0)$. We present the time-to-solution analysis of the solutions obtained using  C-PINN and PINN, together with the computational cost in Table \ref{tab:ex1_2} for this task and the C-PINN solution in Fig \ref{fig:ex1_2}. This accuracy improvement is further corroborated by the optimization dynamics in Fig \ref{fig:gn2}, which demonstrates the significantly enhanced landscape smoothness and gradient stability of C-PINN. This specific test case is meticulously designed to corroborate that the proposed C-PINN framework remains robust and highly accurate even in the presence of weak singularities and non-rectangular curved boundaries.

\begin{table}[htbp]
    \centering
    \caption{Computational cost and actual errors for C-PINN and PINN to reach target $l^2$ accuracy milestones for the weakly singular case of \eqref{eq:ex1}.}
    \label{tab:ex1_2}
    \begin{tabular}{c l | r r c c}
    \toprule
    \multirow{2}{*}{\shortstack{Target \\ $l^2$ Error}} & \multirow{2}{*}{Method} & \multicolumn{2}{c}{Computational Cost} & \multicolumn{2}{c}{Actual Errors} \\
    \cmidrule(lr){3-4} \cmidrule(lr){5-6}

    & & Epochs & Time (s) & \makebox[2.2cm][c]{$l^2$ Error} & \makebox[2.2cm][c]{$l^ \infty$ Error} \\
    \midrule

    \multirow{2}{*}{1.0e-03}
    & C-PINN & 60 & 1.03 & 8.33e-04 & 1.69e-03 \\
    & PINN   & \cellcolor{blue!10}50 & \cellcolor{blue!10}0.84 & 5.92e-04 & 1.37e-03 \\
    \midrule

    \multirow{2}{*}{1.0e-04}
    & C-PINN & \cellcolor{blue!10}110 & \cellcolor{blue!10}1.89 & 7.99e-05 & 1.91e-04 \\
    & PINN   & 450 & 7.65 & 9.87e-05 & 2.37e-04 \\
    \midrule

    \multirow{2}{*}{1.0e-05}
    & C-PINN & \cellcolor{blue!10}530 & \cellcolor{blue!10}9.05 & 9.79e-06 & 3.00e-05 \\
    & PINN   & 4830 & 81.92 & 9.98e-06 & 2.98e-05 \\
    \midrule

    \multirow{2}{*}{1.0e-06}
    & C-PINN & \cellcolor{blue!10}5320 & \cellcolor{blue!10}90.20 & 8.63e-07 & 2.48e-06\\
    & PINN   & 20450 & 346.64 & 9.37e-07 & 2.74e-06 \\
    \bottomrule
    \end{tabular}
\end{table}

\begin{figure}[H]
    \centering
    \includegraphics[width=0.75\linewidth]{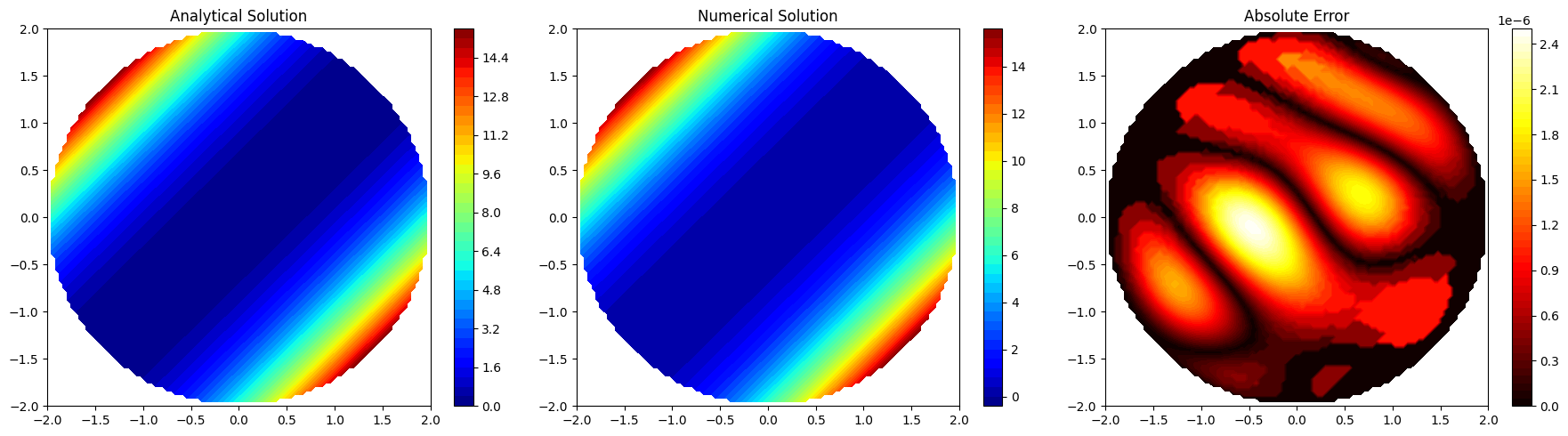}
    \caption{Results for the weakly singular case  of \eqref{eq:ex1}. From left to right: exact solution, numerical solution, and absolute error.}
    \label{fig:ex1_2}
\end{figure}

\begin{figure}[H]
    \centering
    \includegraphics[width=0.75\linewidth]{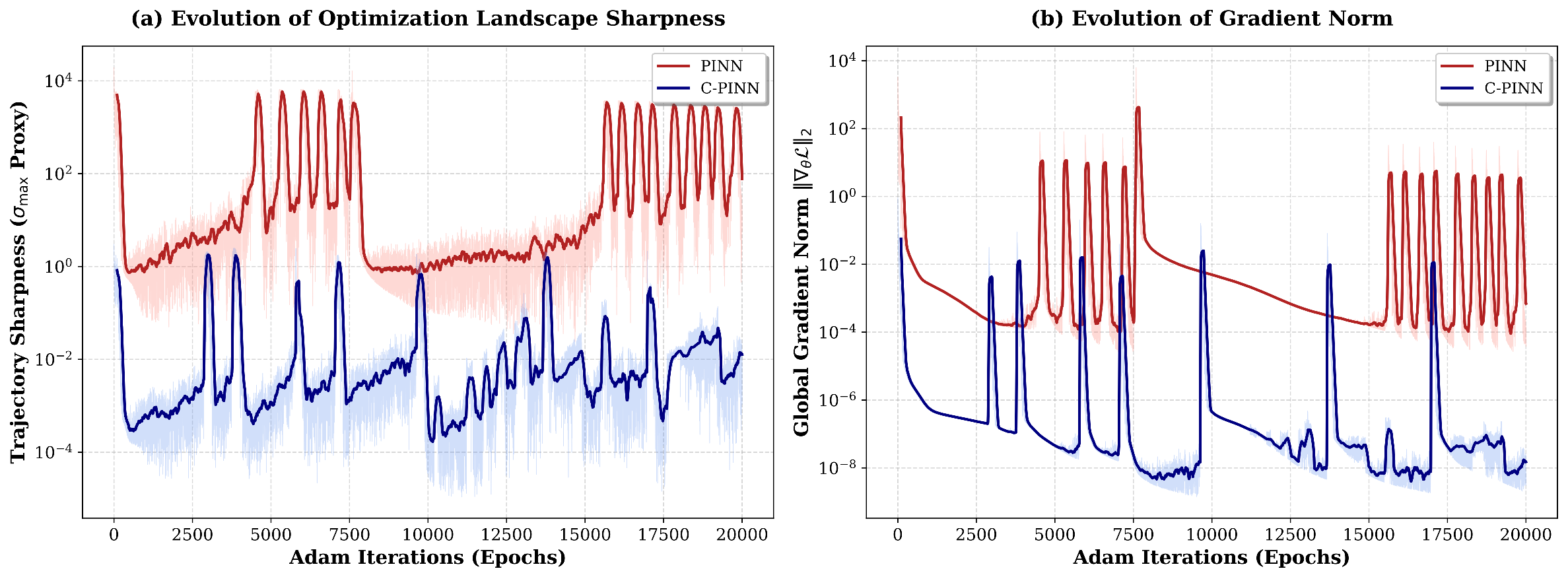}
    \caption{Optimization dynamics comparison: C-PINN vs. PINN for the weakly singular case of \eqref{eq:ex1}.}
    \label{fig:gn2}
\end{figure}

To further demonstrate the robustness and general applicability of the proposed method, we extend the previous example to a more general elliptic equation that includes lower-order terms. Compared to the pure second-order case, this problem involves additional drift and reaction components, making it more challenging. The results show that our method continues to perform well under this more general setting.

\subsection{Example 4.2 General Second-Order Elliptic Equation with Lower-Order Terms}
We consider the following equation which contained additional drift and reaction components:
\begin{equation}
  \begin{cases}
    \mathcal{L} u = A:D^2 u + \bs{b}\cdot\nabla u -c u =f , \ &\text{in} \ \Omega, \\
    u = 0  , \ &\text{on} \ \partial \Omega.
  \end{cases}
\end{equation}

We first consider the case with continuous coefficients. Let
\begin{equation}\label{eq:ex2}
    A=\left( \begin{matrix}
        |\boldsymbol{x}|+1&		-|\boldsymbol{x}|\\
        -|\boldsymbol{x}|&		5|\boldsymbol{x}|+1\\
    \end{matrix} \right),
\end{equation}
the computational domain is set as $\Omega = (-1,1) \times (-1,1)$, and $\bs{b} = (x_1,x_2)^T,c=3$. In this case, the exact solution is set as $u(x_1,x_2)=\sin(2\pi x_1)\sin(\pi x_2)e^{x_1\cos(x_2)}$. In Table \ref{tab:ex2}, we present the numerical cost of the solutions obtained using C-PINN and PINN methods, together with the time required. As is shown in Fig \ref{fig:ex2}, it is evident that our method effectively solves the problem. This accuracy improvement is further corroborated by the optimization dynamics in Fig \ref{fig:gn3}, which demonstrates the significantly enhanced landscape smoothness and gradient stability of C-PINN.

\begin{table}[htbp]
    \centering
    \caption{Computational cost and actual errors for C-PINN and PINN to reach target $l^2$ accuracy milestones for problem \eqref{eq:ex2}.}
    \label{tab:ex2}
    \begin{tabular}{c l | r r c c}
    \toprule
    \multirow{2}{*}{\shortstack{Target \\ $l^2$ Error}} & \multirow{2}{*}{Method} & \multicolumn{2}{c}{Computational Cost} & \multicolumn{2}{c}{Actual Errors} \\
    \cmidrule(lr){3-4} \cmidrule(lr){5-6}

    & & Epochs & Time (s) & \makebox[2.2cm][c]{$l^2$ Error} & \makebox[2.2cm][c]{$l^ \infty$ Error} \\
    \midrule

    \multirow{2}{*}{5.0e-03}
    & C-PINN & \cellcolor{blue!10}1840 & \cellcolor{blue!10}26.39 & 4.99e-03 & 1.70e-02 \\
    & PINN   & 2210 & 33.13 & 4.96e-03 & 1.91e-02 \\
    \midrule

    \multirow{2}{*}{1.0e-03}
    & C-PINN & \cellcolor{blue!10}3910 & \cellcolor{blue!10}56.96 & 9.64e-04 & 3.52e-03 \\
    & PINN   & 4640 & 69.01 & 9.19e-04 & 2.88e-03 \\
    \midrule

    \multirow{2}{*}{5.0e-04}
    & C-PINN & \cellcolor{blue!10}4970 & \cellcolor{blue!10}72.33 & 4.96e-04 & 1.99e-03 \\
    & PINN   & 6030 & 88.84 & 4.90e-04 & 1.61e-03 \\
%
    \bottomrule
    \end{tabular}
\end{table}

\begin{figure}[H]
    \centering
    \includegraphics[width=0.85\linewidth]{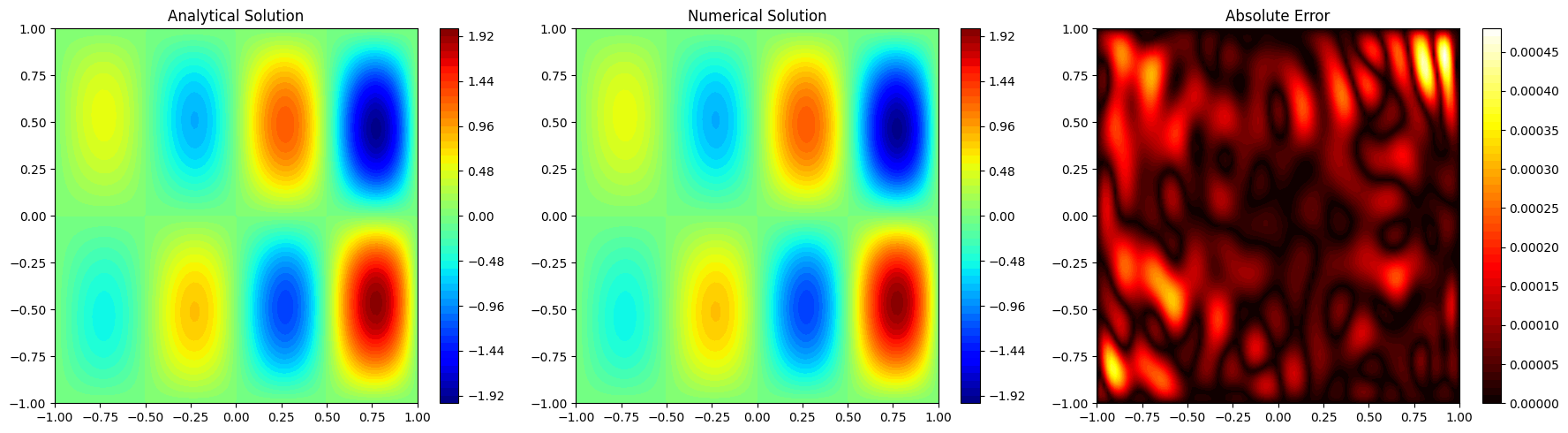}
    \caption{Results for problem Eq. \eqref{eq:ex2}. From left to right: exact solution, numerical solution, and absolute error.}
    \label{fig:ex2}
\end{figure}

\begin{figure}[H]
    \centering
    \includegraphics[width=0.85\linewidth]{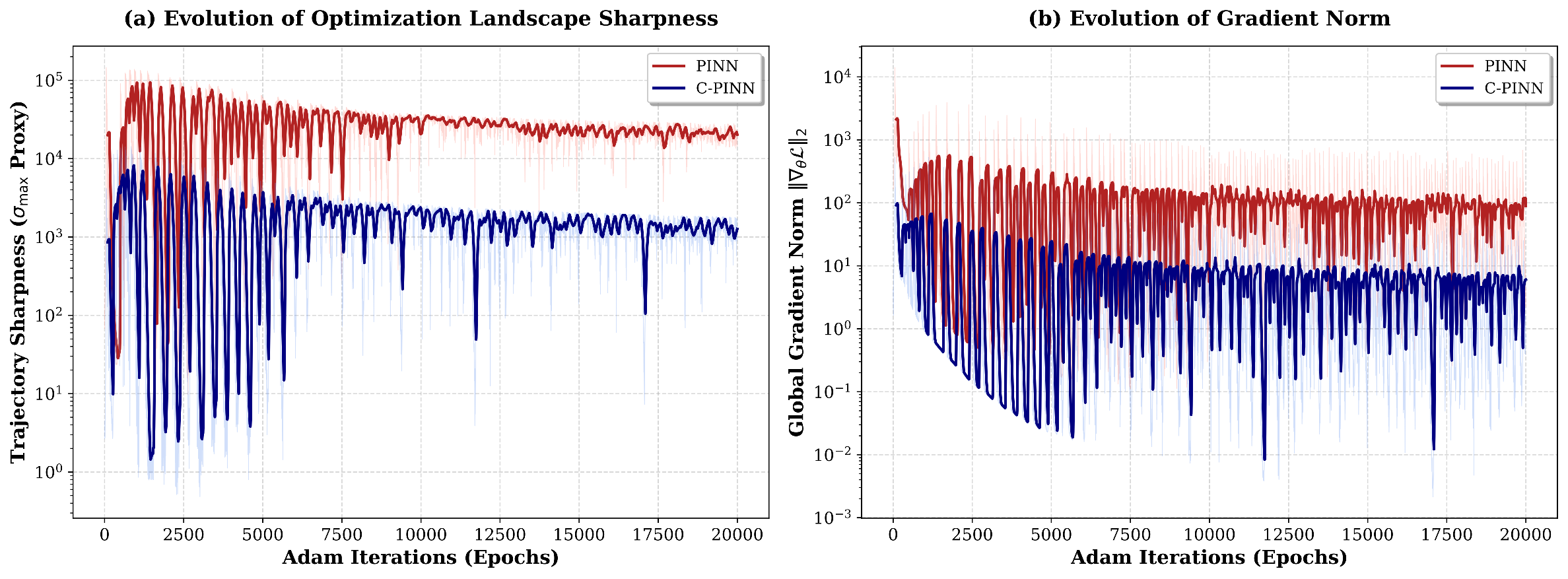}
    \caption{Optimization dynamics comparison: C-PINN vs. PINN for problem Eq. \eqref{eq:ex2}}
    \label{fig:gn3}
\end{figure}
While the case with continuous coefficients provides a baseline, we further investigate the more challenging scenario with discontinuous coefficients to assess the robustness of the proposed method across different regularity regimes. Let
\begin{equation} \label{eq:ex3}
    A=\left( \begin{matrix}
        2&		\frac{x_1x_2}{|x_1x_2|}\\
        \frac{x_1x_2}{|x_1x_2|}&		2\\
    \end{matrix} \right),
\end{equation}
the computational domain is set as $\Omega = (-1,1) \times (-1,1)$, and $\boldsymbol{b} = (x_1,x_2)^T,c=3$. In this case, the exact solution is set as $u(x)=(x_1 e^{1-|x_1|}-x_1)(x_2 e^{1-|x_2|}-x_2)$. Table \ref{tab:ex3} presents the numerical comparison for this case. While the baseline PINN may exhibit a marginal advantage under loose error tolerances, its highly non-convex loss landscape frequently leads to optimization stagnation or aberrant loss spikes. In contrast, our C-PINN fundamentally smooths the loss landscape, thereby demonstrating a pronounced superiority and robust convergence when strict high-accuracy targets are imposed. As is shown in Fig \ref{fig:ex3}, our method also performs well in discontinuous coefficients case. This accuracy improvement is further corroborated by the optimization dynamics in Fig \ref{fig:gn4}, which demonstrates the significantly enhanced landscape smoothness and gradient stability of C-PINN.

\begin{table}[htbp]
    \centering
    \caption{Computational cost and actual errors for C-PINN and PINN to reach target $l^2$ accuracy milestones for problem \eqref{eq:ex3}.}
    \label{tab:ex3}
    \begin{tabular}{c l | r r c c}
    \toprule
    \multirow{2}{*}{\shortstack{Target \\ $l^2$ Error}} & \multirow{2}{*}{Method} & \multicolumn{2}{c}{Computational Cost} & \multicolumn{2}{c}{Actual Errors} \\
    \cmidrule(lr){3-4} \cmidrule(lr){5-6}

    & & Epochs & Time (s) & \makebox[2.2cm][c]{$l^2$ Error} & \makebox[2.2cm][c]{$l^ \infty$ Error} \\
    \midrule

    \multirow{2}{*}{1.0e-03}
    & C-PINN & \cellcolor{blue!10}520 & \cellcolor{blue!10}7.79 & 9.79e-04 & 3.53e-03 \\
    & PINN   & 560 & 8.18 & 8.77e-04 & 2.64e-03 \\
    \midrule

    \multirow{2}{*}{5.0e-04}
    & C-PINN & 900 & 13.43 & 4.55e-04 & 1.75e-03 \\
    & PINN   & \cellcolor{blue!10}780 & \cellcolor{blue!10}12.06 & 4.51e-04 & 2.08e-03 \\
    \midrule

    \multirow{2}{*}{3.0e-04}
    & C-PINN & \cellcolor{blue!10}1220 & \cellcolor{blue!10}18.74 & 2.74e-04 & 9.77e-04 \\
    & PINN   & 8140 & 115.23 & 2.61e-04 & 1.04e-03 \\
    \midrule

    \multirow{2}{*}{1.0e-04}
    & C-PINN & \cellcolor{blue!10}8500 & \cellcolor{blue!10}128.52 & 9.07e-05 & 6.13e-04\\
    & PINN   & 34140 & 537.36 & 9.65e-05 & 5.72e-04 \\
    \bottomrule
    \end{tabular}
\end{table}

\begin{figure}[H]
    \centering
    \includegraphics[width=0.8\linewidth]{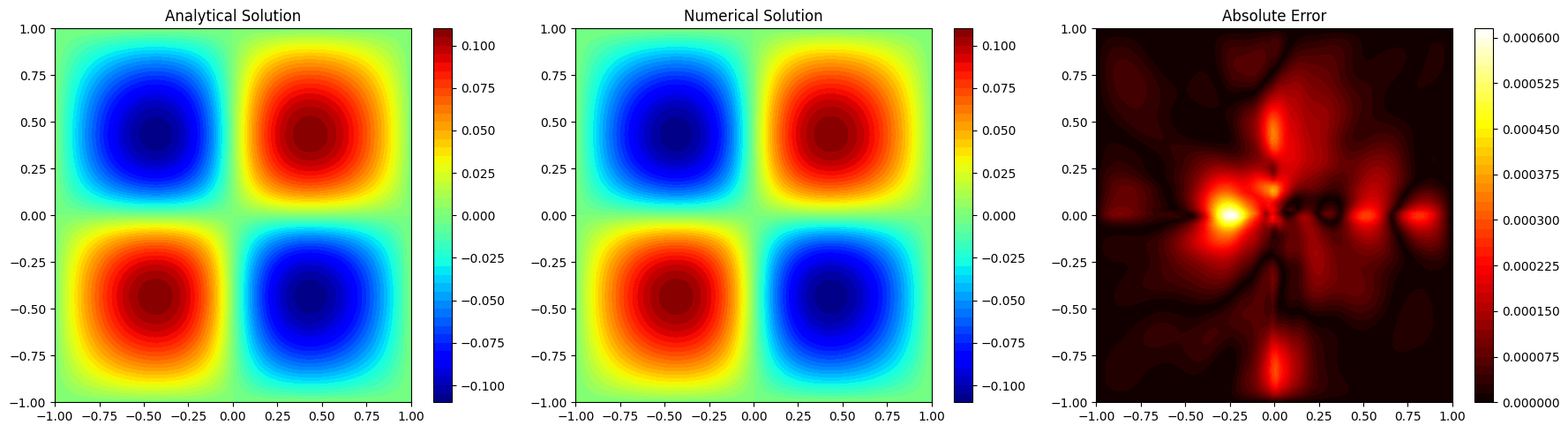}
    \caption{Results for problem Eq. \eqref{eq:ex3}. From left to right: exact solution, numerical solution, and absolute error.}
    \label{fig:ex3}
\end{figure}

\begin{figure}[H]
    \centering
    \includegraphics[width=0.8\linewidth]{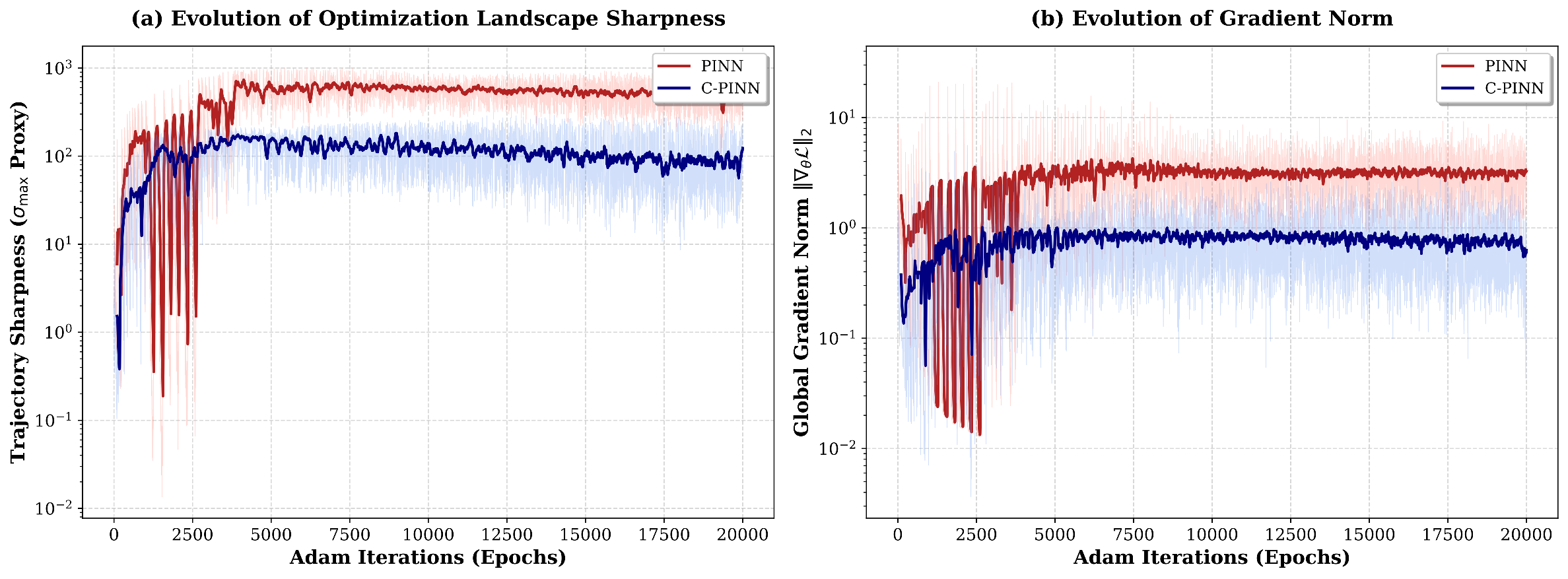}
    \caption{Optimization dynamics comparison: C-PINN vs. PINN for problem Eq. \eqref{eq:ex3}}
    \label{fig:gn4}
\end{figure}

In the preceding examples, the proposed C-PINN framework has demonstrated high accuracy and robust convergence in solving low-dimensional partial differential equations. However, in many advanced practical applications, the governing equations are naturally posed in high-dimensional spaces. Traditional mesh-based numerical methods often become computationally intractable in these scenarios due to the exponential explosion of grid points, famously known as the curse of dimensionality. To further assess the scalability and practical utility of our proposed method, we now transition our focus from low-dimensional configurations to high-dimensional elliptic equations. By leveraging the inherent mesh-free advantage of deep neural networks alongside the mathematically rigorous Cord\`{e}s preconditioning, we aim to corroborate that the C-PINN framework can effectively circumvent the curse of dimensionality and maintain stable, high-fidelity approximations in high-dimensional settings.

\subsection{Example 4.3 High Dimensional Equations}
To evaluate the scalability and robustness of the proposed C-PINN framework in higher dimensions and over complex, non-cuboid geometries, we consider a three-dimensional linear non-divergence elliptic equation. Let the computational domain be an ellipsoid defined by $\Omega = \{(x_1,x_2,x_3) \in \mathbb{R}^3 \mid (x_1/a)^2 + (x_2/b)^2 + (x_3/c)^2 < 1\}$. To ensure the strict ellipticity and the validity of the Cord\`{e}s condition, we constrain the semi-axes such that $\max(a^2, b^2, c^2) = R_{\max}^2 < 9$. In our numerical implementation, the semi-axes of the ellipsoidal domain are specifically chosen as $a = 1.5$, $b = 1.0$, and $c = 0.8$. With this geometric configuration, the maximum squared radius of the domain is bounded by $R_{\max}^2 = 1.5^2 = 2.25$. This bound strictly satisfies the aforementioned theoretical constraint, thereby guaranteeing that the coefficient matrix $A$ maintains the Cord\`{e}s condition uniformly across the entire continuous domain $\Omega$.

The governing equation is given by Eq. \eqref{eq:general_nondiv}, where the strongly heterogeneous coefficient matrix $A(\bs x)$ is constructed as $A(\bs x) = 3I + \bs x\bs x^T$ with $\bs x = (x_1, x_2, x_3)^T$. Explicitly, the matrix takes the following form
\begin{equation} \label{eq:3d_matrix_1}
    A(\bs x) = \begin{pmatrix}
    3 + x_1^2 & x_1 x_2 & x_1 x_3 \\
    x_1 x_2 & 3 + x_2^2 & x_2 x_3 \\
    x_1 x_3 & x_2 x_3 & 3 + x_3^2
    \end{pmatrix}.
\end{equation}

To systematically quantify the approximation error, we construct a synthetic exact solution that smoothly decays to zero at the domain boundary:
\begin{equation} \label{eq:3d_exact_1}
    u(x_1,x_2,x_3) = B(x_1,x_2,x_3) \cdot \exp(x_1x_2x_3),
\end{equation}
where the boundary distance function $B(x_1,x_2,x_3)$ is defined as
\begin{equation}
    B(x_1,x_2,x_3) = 1 - \left(\frac{x_1}{a}\right)^2 - \left(\frac{x_2}{b}\right)^2 - \left(\frac{x_3}{c}\right)^2.
\end{equation}

By this design, the exact solution naturally satisfies the homogeneous Dirichlet boundary condition $u = 0$ on $\partial\Omega$. The corresponding right-hand side source term $f(x_1,x_2,x_3)$ is derived analytically by substituting Eq. \eqref{eq:3d_exact_1} into the differential operator. This benchmark problem presents a comprehensive challenge for neural network solvers due to its elevated dimensionality, highly varying coefficient matrix, and non-trivial curved boundary.

To better illustrate the internal characteristics of the 3D solution, Fig. \ref{fig:3d_1} presents a cutaway rendering of the predicted field, where one quadrant has been removed to expose the center. In Table \ref{tab:3d_1}, we present the numerical errors of the solutions of this case. This accuracy improvement is further corroborated by the optimization dynamics in Fig \ref{fig:gn5}, which demonstrates the significantly enhanced landscape smoothness and gradient stability of C-PINN.

\begin{table}[htbp]
    \centering
    \caption{Computational cost and actual errors for C-PINN and PINN to reach target $l^2$ accuracy milestones for problem \eqref{eq:3d_exact_1}.}
    \label{tab:3d_1}
    \begin{tabular}{c l | r r c c}
    \toprule
    \multirow{2}{*}{\shortstack{Target \\ $l^2$ Error}} & \multirow{2}{*}{Method} & \multicolumn{2}{c}{Computational Cost} & \multicolumn{2}{c}{Actual Errors} \\
    \cmidrule(lr){3-4} \cmidrule(lr){5-6}

    & & Epochs & Time (s) & \makebox[2.2cm][c]{$l^2$ Error} & \makebox[2.2cm][c]{$l^ \infty$ Error} \\
    \midrule

    \multirow{2}{*}{5.0e-03}
    & C-PINN & \cellcolor{blue!10}810 & \cellcolor{blue!10}27.07 & 4.50e-03 & 1.41e-02 \\
    & PINN   & 850 & 28.31 & 4.62e-03 & 1.41e-02 \\
    \midrule

    \multirow{2}{*}{1.0e-03}
    & C-PINN & \cellcolor{blue!10}2160 & \cellcolor{blue!10}71.78 & 9.98e-04 & 3.57e-03 \\
    & PINN   & 2600 & 86.42 & 9.98e-04 & 2.96e-03 \\
    \midrule

    \multirow{2}{*}{5.0e-04}
    & C-PINN & \cellcolor{blue!10}3510 & \cellcolor{blue!10}116.71 & 4.98e-04 & 1.42e-03 \\
    & PINN   & 4250 & 141.29 & 4.93e-04 & 1.39e-03 \\
    \midrule

    \multirow{2}{*}{1.0e-04}
    & C-PINN & \cellcolor{blue!10}6010 & \cellcolor{blue!10}199.69 & 9.97e-05 & 3.55e-04\\
    & PINN   & 10850 & 359.62 & 9.99e-05 & 4.13e-04 \\
    \bottomrule
    \end{tabular}
\end{table}

\begin{figure}[H]
    \centering
    \includegraphics[width=1.0\linewidth]{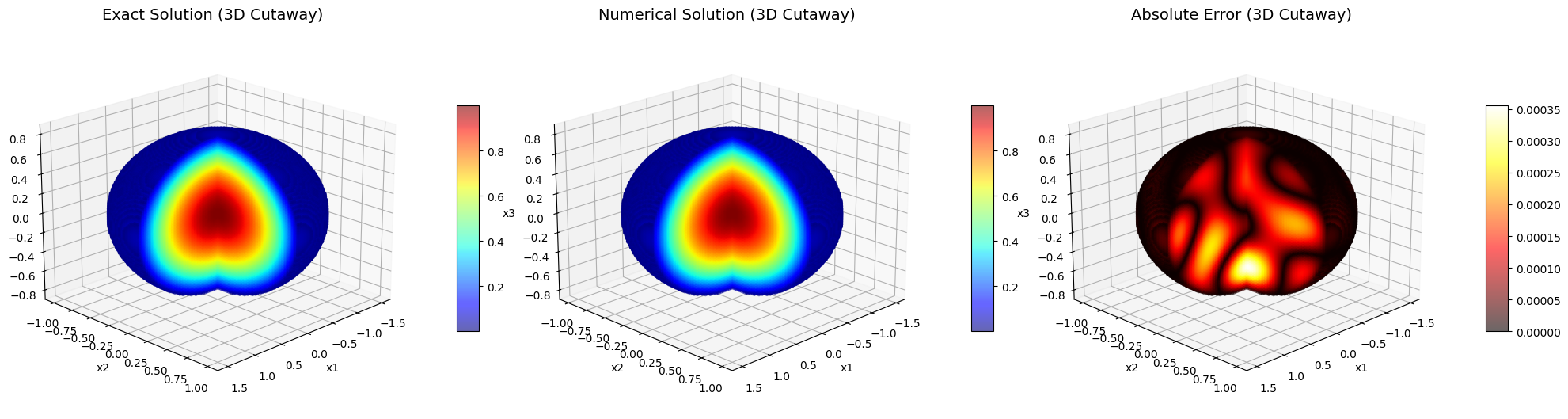}
    \caption{Results for problem Eq. \eqref{eq:3d_exact_1}. From left to right: exact solution, numerical solution, and absolute error.}
    \label{fig:3d_1}
\end{figure}
\vspace{-0.2in}
\begin{figure}[H]
    \centering
    \includegraphics[width=0.8\linewidth]{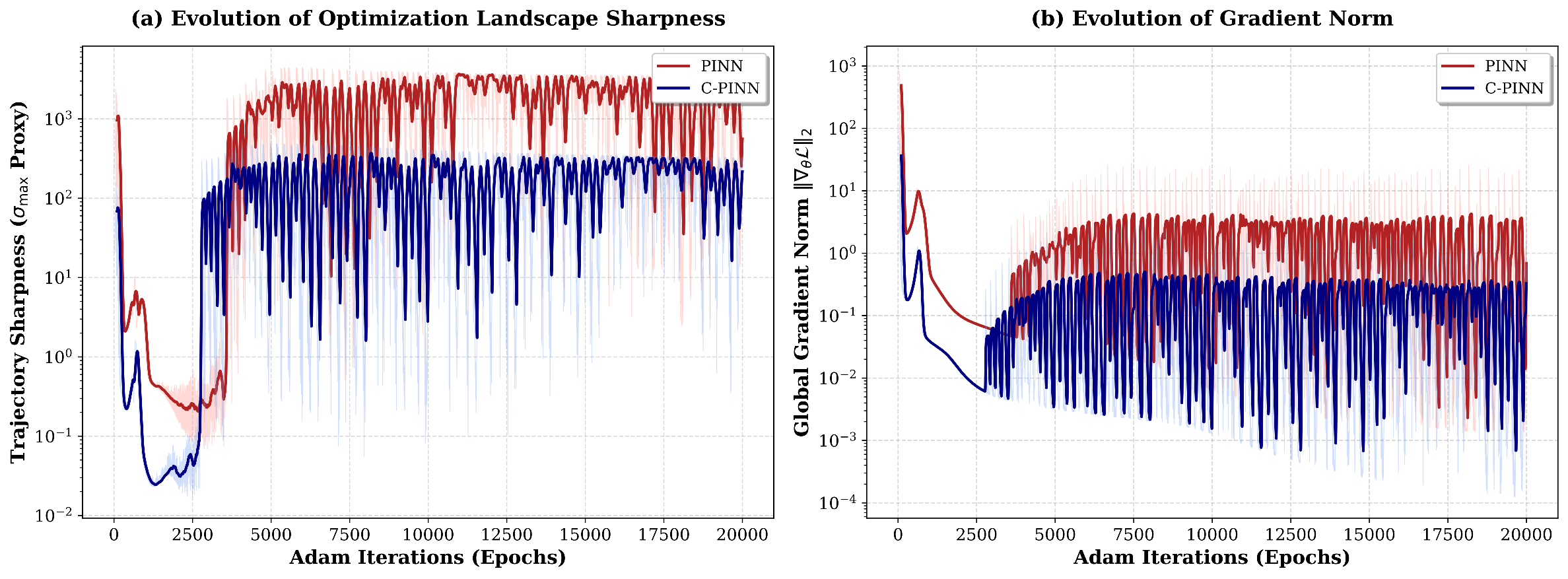}
    \caption{Optimization dynamics comparison: C-PINN vs. PINN for problem Eq. \eqref{eq:3d_exact_1}}
    \label{fig:gn5}
\end{figure}

While the preceding 3D experiments demonstrated C-PINN's capability to handle complex geometries and low-regularity features, a critical advantage of neural-network-based solvers is their potential to overcome the curse of dimensionality. To evaluate this scalability, our subsequent experiments consider higher-dimensional equations, where traditional grid-based methods become computationally intractable.

Consider the following 5D case:
\begin{equation} \label{eq:5d}
    A(\bs x) = (a_{ij})_{5 \times 5} = \begin{cases} 5, & \text{if } i = j \\ \frac{x_i x_j}{|x_ix_j|}, & \text{if } i \neq j \end{cases} \quad \text{for } i, j \in \{1, 2, \dots, 5\},
\end{equation}
which satisfies Eq. \eqref{eq:general_nondiv}, and the exact solution is $u(\bs x) = \prod_{i=1}^{5} \cos\left(\frac{\pi}{2} x_i\right) + \prod_{i=1}^{5} \sin(\pi x_i)$ with domain $\Omega = (-1,1)^5$. Obviously, the exact solution can satisfy $u=0$ on $\partial \Omega$.

Similar to the 3D discontinuous scenario, it is essential to rigorously verify the Cord\`{e}s condition for this 5D high-dimensional operator. For almost every $\bs x \in \Omega$, the squared values of the off-diagonal elements are precisely $1$. Given the spatial dimension $d=5$, we can compute the trace and the squared Frobenius norm of $A(\bs x)$ as constants:
\begin{align*}
    \text{tr}(A) &= \sum_{i=1}^5 5 = 25, \\
    \|A\|_F^2 &= \sum_{i=1}^5 5^2 + \sum_{i \neq j} \left( \frac{x_i x_j}{|x_i x_j|} \right)^2 = 5 \times 25 + (5^2 - 5) \times 1 = 125 + 20 = 145.
\end{align*}
According to the generalized Cord\`{e}s condition for dimension $d=5$, the required lower bound for strict ellipticity is $d - 1 = 4$. Evaluating the Cord\`{e}s ratio yields
\begin{equation}
    \frac{(\text{tr}(A))^2}{\|A\|_F^2} = \frac{25^2}{145} = \frac{625}{145} = \frac{125}{29} \approx 4.310.
\end{equation}
Since $\frac{125}{29} > 4$, there exists a strictly positive constant $\varepsilon = \frac{9}{29}$ such that ${(\text{tr}(A))^2}/{\|A\|_F^2} = 4 + \varepsilon$. This unequivocally demonstrates that the highly discontinuous 5D matrix $A(\bs x)$ uniformly satisfies the Cord\`{e}s condition almost everywhere in the hypercube domain $\Omega$, thereby theoretically guaranteeing the stable convergence of the C-PINN methodology even in high-dimensional and low-regularity settings. In Table \ref{tab:5d}, we present the numerical errors of the solutions of this case. This accuracy improvement is further corroborated by the optimization dynamics in Fig \ref{fig:gn6}, which demonstrates the significantly enhanced landscape smoothness and gradient stability of C-PINN.

\begin{table}[htbp]
    \centering
    \caption{Computational cost and actual errors for C-PINN and PINN to reach target $l^2$ accuracy milestones for problem \eqref{eq:5d}.}
    \label{tab:5d}
    \begin{tabular}{c l | r r c c}
    \toprule
    \multirow{2}{*}{\shortstack{Target \\ $l^2$ Error}} & \multirow{2}{*}{Method} & \multicolumn{2}{c}{Computational Cost} & \multicolumn{2}{c}{Actual Errors} \\
    \cmidrule(lr){3-4} \cmidrule(lr){5-6}

    & & Epochs & Time (s) & \makebox[2.2cm][c]{$l^2$ Error} & \makebox[2.2cm][c]{$l^ \infty$ Error} \\
    \midrule

    \multirow{2}{*}{5.0e-02}
    & C-PINN & \cellcolor{blue!10}400 & \cellcolor{blue!10}31.71 & 3.62e-02 & 2.58e-01 \\
    & PINN   & 720 & 56.96 & 4.30e-02 & 5.72e-01 \\
    \midrule

    \multirow{2}{*}{1.0e-02}
    & C-PINN & \cellcolor{blue!10}560 & \cellcolor{blue!10}44.37 & 8.95e-03 & 6.79e-02 \\
    & PINN   & 1020 & 80.67 & 9.97e-03 & 1.29e-01 \\
    \midrule

    \multirow{2}{*}{5.0e-03}
    & C-PINN & \cellcolor{blue!10}840 & \cellcolor{blue!10}66.51 & 4.95e-03 & 5.02e-02 \\
    & PINN   & 2040 & 141.29 & 4.95e-03 & 4.31e-02 \\
    \midrule

    \multirow{2}{*}{1.0e-03}
    & C-PINN & \cellcolor{blue!10}7980 & \cellcolor{blue!10}631.44 & 9.84e-04 & 1.26e-02\\
    & PINN   & 18260 & 1444.14 & 9.99e-04 & 8.60e-03 \\
    \bottomrule
    \end{tabular}
\end{table}

In the previous examples, the exact solution is known, which allows for a direct evaluation of the approximation error. However, in many practical problems, the exact solution is unavailable. To better assess the performance of the proposed method in such realistic scenarios, we consider elliptic equations with a constant source term and unknown exact solutions. In addition, both continuous and discontinuous coefficient cases are included to evaluate the robustness of the method under different regularity settings.

\begin{figure}[H]
    \centering
    \includegraphics[width=0.8\linewidth]{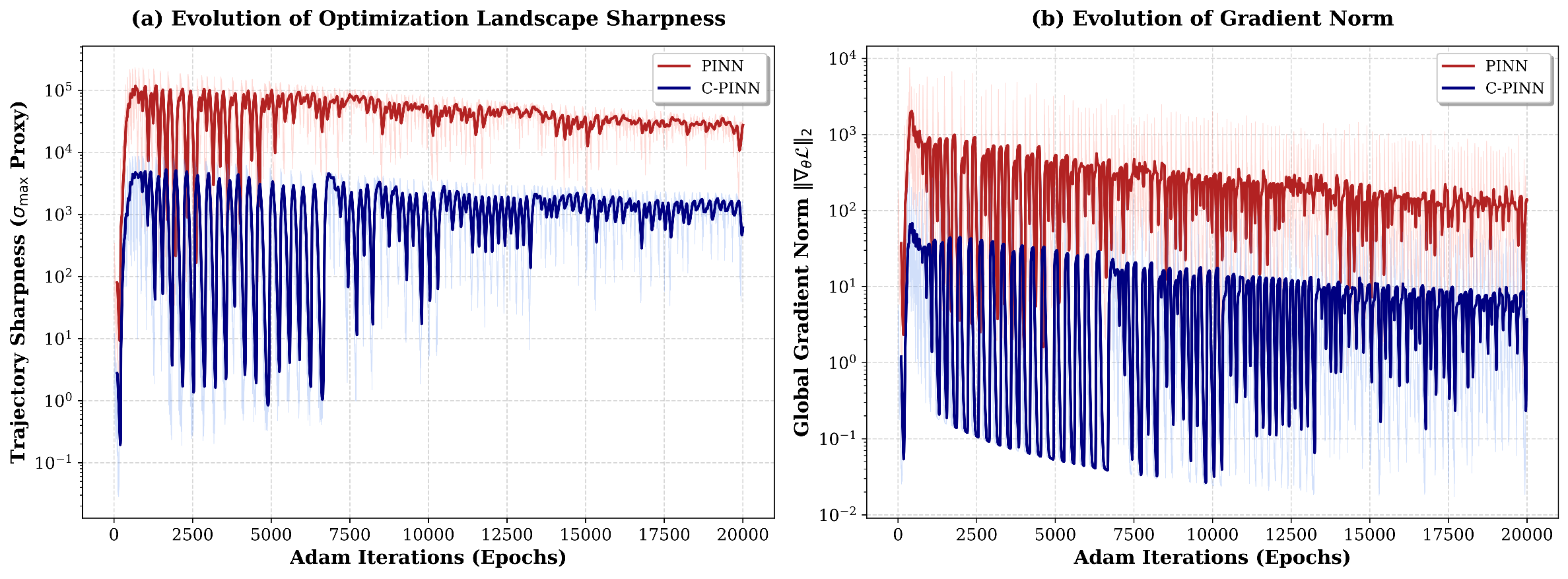}
    \caption{Optimization dynamics comparison: C-PINN vs. PINN for problem Eq. \eqref{eq:5d}}
    \label{fig:gn6}
\end{figure}

\subsection{Example 4.4 No Knowing the Exact Solution Case}
Consider the following equation:
\begin{equation}
  \begin{cases}
    \mathcal{L} u = A:D^2 u + \bs{b}\cdot \nabla u -cu =f  , \ &\text{in} \ \Omega, \\
    u = 0  , \ &\text{on} \ \partial \Omega.
  \end{cases}
\end{equation}
First, we consider the continuous coefficients with unknown solution case. Let $\Omega = (-\frac{1}{2},\frac{1}{2}) \times (-\frac{1}{2},\frac{1}{2})$, and the coefficients are specified as $\boldsymbol{b} = (x_1, x_2)^T,c=4$, and
\begin{equation} \label{eq:ex4}
    A=\left( \begin{matrix}
    |\boldsymbol{x}|+2&		-|\boldsymbol{x}|\\
    -|\boldsymbol{x}|&		3|\boldsymbol{x}|+2\\
    \end{matrix} \right) .
\end{equation}
The source term is set to $f=2$. To evaluate the performance of the proposed approach, Fig \ref{fig:ex4} illustrates the numerical solutions generated by C-PINN and Chebyshev spectral method. The clear agreement between the two plots confirms the effectiveness of our method in accurately capturing the solution behavior for this class of equations.

\begin{figure}[H]
    \centering
    \includegraphics[width=0.75\linewidth]{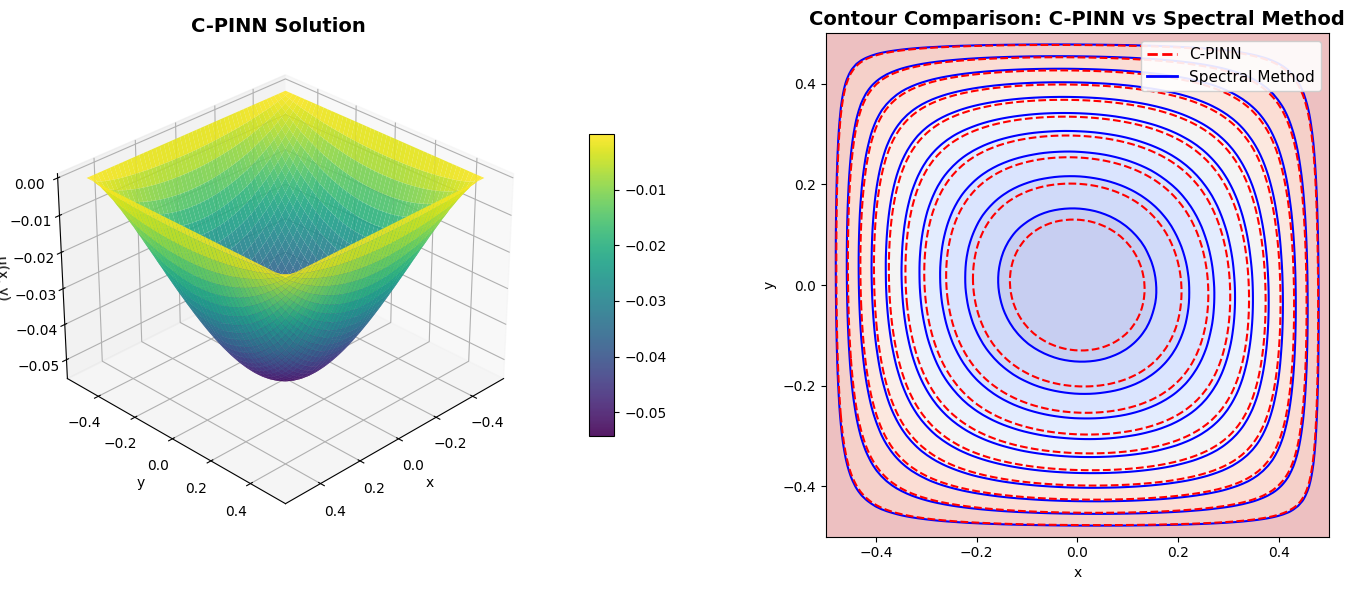}
    \caption{Comparison of numerical solutions for problem \eqref{eq:ex4}. Left: Results obtained via C-PINN; Right: Results comparison between C-PINN and Chebyshev spectral method}
    \label{fig:ex4}
\end{figure}

After that, we consider the discontinuous coefficients with unknown solution. In this case, we consider the domain to be $\Omega = (-1,1) \times (-1,1)$, and the coefficient is $\boldsymbol{b} = (x_1,x_2)^T, c = 3$, and
\begin{equation}\label{eq:ex5}
    A=\left( \begin{matrix}
    2&		\frac{x_1x_2}{|x_1x_2|}\\
    \frac{x_1x_2}{|x_1x_2|}&		2\\
    \end{matrix} \right) .
\end{equation}
The source term is set to $f=2$. The Fig \ref{fig:ex5} shows the numerical solution of problem \eqref{eq:ex5} by C-PINN method and Chebyshev spectral method. The above results confirm that the proposed method remains effective in more realistic settings where the exact solution is unavailable. Moreover, its stable performance across both continuous and discontinuous coefficient cases highlights its robustness with respect to varying regularity, indicating strong potential for practical applications.

\begin{figure}[H]
    \centering
    \includegraphics[width=0.75\linewidth]{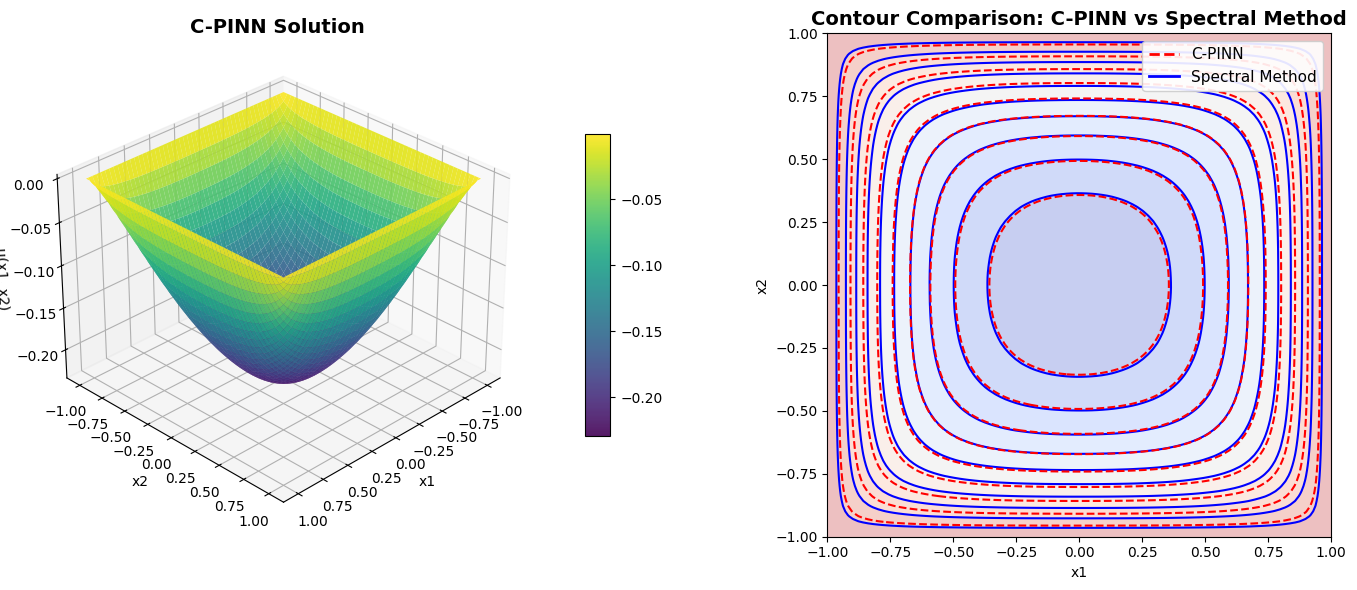}
    \caption{Comparison of numerical solutions for problem \eqref{eq:ex5}. Left: Results obtained via C-PINN; Right: Results comparison between C-PINN and Chebyshev spectral method}
    \label{fig:ex5}
\end{figure}

Building upon the previous experiments on linear elliptic equations, we further investigate the performance of the proposed method on HJB equations. As a prototypical class of fully nonlinear PDEs arising in optimal control, HJB equations pose significant challenges for numerical methods. This setting provides a more stringent test for evaluating the robustness and effectiveness of our approach.

\subsection{Example 4.5 Hamilton-Jacobi-Bellman Equations}
In this experiment, we solve the following nonlinear HJB equations on the domain $\Omega = (-\pi, \pi) \times (-\pi, \pi)$:
\begin{equation}\label{eq:ex6}
  \begin{cases}
    \underset{\alpha \in \varLambda}{\mathrm{sup}}\left( \mathcal{L} ^{\alpha}u-f^{\alpha} \right) =0 , \ &\mathrm{in} \ \Omega ,\\
    u=0 , \ &\mathrm{on} \ \partial \Omega,\\
  \end{cases}
\end{equation}
where
\begin{equation}
    \mathcal{L} ^{\alpha}u=A^{\alpha}(x):D^2u+\boldsymbol{b}^{\alpha}(x)\cdot \nabla u-c^{\alpha}(x)u,
\end{equation}
and
\begin{equation}
    A^{\alpha}=\begin{cases}
        \left( \begin{matrix}
        2 &		\ 1/2\\
        1/2&	\	3/2\\
    \end{matrix} \right) +\frac{x_1x_2}{|x_1||x_2|}\left( \begin{matrix}
        1&	\	1/2\\
        1/2&	\	1/2\\
    \end{matrix} \right) ,&		\alpha =1,\\
        \left( \begin{matrix}
        3/2&	\	1/2\\
        1/2&	\	2\\
    \end{matrix} \right) +\frac{x_1x_2}{|x_1||x_2|}\left( \begin{matrix}
        1/2&	\	1/2\\
        1/2&	\	1\\
    \end{matrix} \right) ,&		\alpha =2.\\
    \end{cases}
\end{equation}
We take the index set $ \varLambda =\{ 1,2 \}$ and define the coefficients as $\boldsymbol{b}_1 = \boldsymbol{b}_2 = (1,0), c_1 = c_2 = 1 $. The source functions $f_1$ and $f_2$ are chosen to fit the exact solution $ u(x_1 , x_2) = \sin(x_1) \sin(x_2) $. In Table \ref{tab:ex6}, we present the numerical cost of the solutions obtained using C-PINN and PINN methods. As is shown in Fig \ref{fig:ex6}, it is evident that our method effectively solves HJB equations. This accuracy improvement is further corroborated by the optimization dynamics in Fig. \ref{fig:gn8}, which demonstrates the significantly enhanced landscape smoothness and gradient stability of C-PINN.

\begin{table}[htbp]
    \centering
    \caption{Computational cost and actual errors for C-PINN and PINN to reach target $l^2$ accuracy milestones for problem \eqref{eq:ex6}.}
    \label{tab:ex6}
    \begin{tabular}{c l | r r r c c}
    \toprule
    \multirow{2}{*}{\shortstack{Target \\ $l^2$ Error}} & \multirow{2}{*}{Method} & \multicolumn{3}{c}{Computational Cost} & \multicolumn{2}{c}{Actual Errors} \\
    \cmidrule(lr){3-5} \cmidrule(lr){6-7}

    & & Epochs & Newton Iters & Time (s) & \makebox[2.2cm][c]{$l^2$ Error} & \makebox[2.2cm][c]{$l^ \infty$ Error} \\
    \midrule

    \multirow{2}{*}{1.0e-03}
    & C-PINN & \cellcolor{blue!10}1010 & 2 & \cellcolor{blue!10}14.74 & 8.87e-04 & 3.65e-03 \\
    & PINN   & 1420 & - & 22.35 & 9.91e-04 & 4.32e-03 \\
    \midrule

    \multirow{2}{*}{5.0e-04}
    & C-PINN & \cellcolor{blue!10}1390 & 2 & \cellcolor{blue!10}20.16 & 4.85e-04 & 2.00e-03 \\
    & PINN   & 1810 & - & 28.74 & 4.96e-04 & 2.07e-03 \\
    \midrule

    \multirow{2}{*}{1.0e-04}
    & C-PINN & \cellcolor{blue!10}13300 & 14 & \cellcolor{blue!10}190.08 & 9.74e-05 & 3.48e-04 \\
    & PINN   & 17080 & - & 268.84 & 9.60e-05 & 3.16e-04 \\
    \midrule

    \multirow{2}{*}{5.0e-05}
    & C-PINN & \cellcolor{blue!10}23170 & 24 & \cellcolor{blue!10}331.04 & 4.21e-05 & 1.31e-04\\
    & PINN   & 36800 & - & 576.98 & 4.85e-05 & 1.67e-04 \\
    \bottomrule
    \end{tabular}
\end{table}

\begin{figure}[H]
    \centering
    \includegraphics[width=0.8\linewidth]{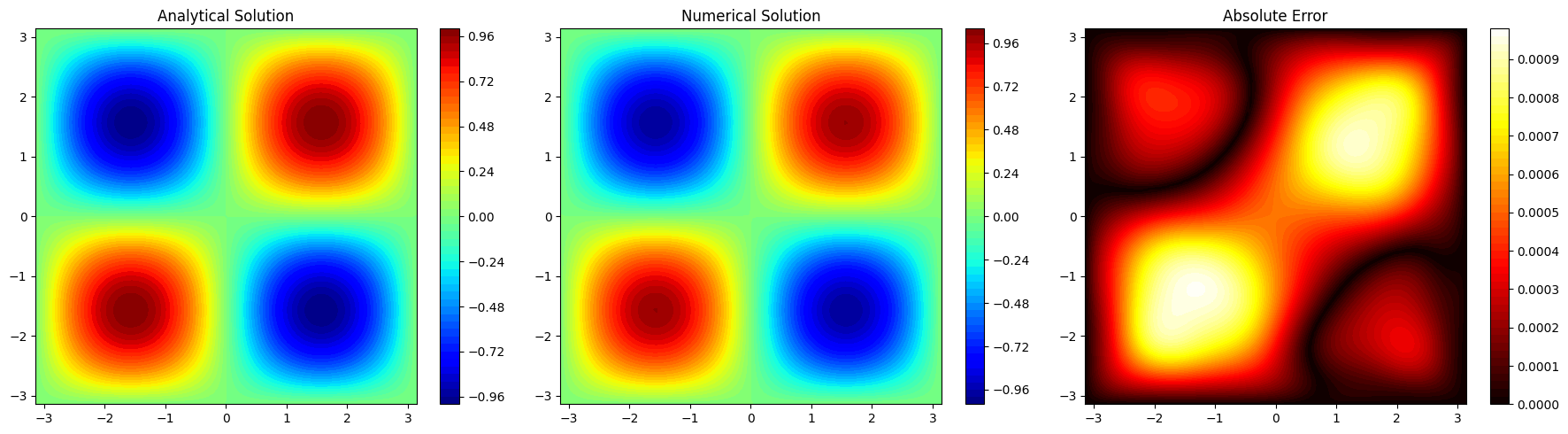}
    \caption{Results for problem Eq. \eqref{eq:ex6}. From left to right: exact solution, numerical solution, and absolute error.}
    \label{fig:ex6}
\end{figure}

\begin{figure}[H]
    \centering
    \includegraphics[width=0.7\linewidth]{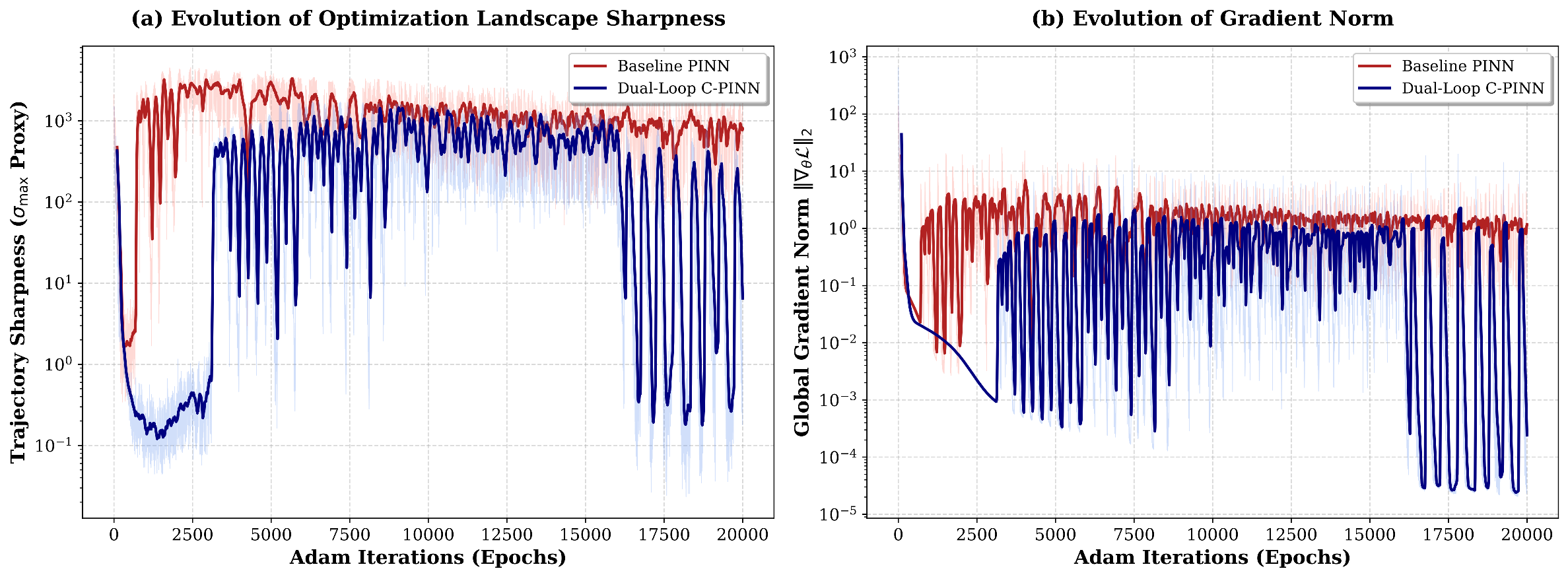}
    \caption{Optimization dynamics comparison: C-PINN vs. PINN for problem Eq. \eqref{eq:ex6}}
    \label{fig:gn8}
\end{figure}

Building upon the results for HJB equations, we further investigate the MA equations, which serves as a prototypical example of fully nonlinear PDEs with strong structural constraints. Compared to HJB equations, the MA equations introduce additional challenges due to its highly nonlinear and degenerate nature. This example provides a more stringent benchmark for evaluating the robustness and generality of the proposed method.

\subsection{Example 4.6 Monge-Amp\`{e}re Equations}
We consider the smooth radial function:
\begin{equation}
    u(x_1,x_2)=e^{\frac{x_1^2+x_2^2}{2}},
\end{equation}
on the unit square $\Omega = (0,1) \times (0,1)$. A straight forward calculation demonstrates that
\begin{equation}\label{eq:ma1}
    f(x_1,x_2):= \det (D^2 u(x_1,x_2))= (1+x_1^2+x_2^2)e^{x_1^2+x_2^2}.
\end{equation}
Denote $g$ as the restriction of $u$ to $\partial \Omega$, where $u$ is a convex solution to the Dirichlet problem of \eqref{eq1} with right-hand side $f$ defined by \eqref{eq:ma1} and boundary values $g$. In Table \ref{tab:ma}, we present the numerical cost of the solutions obtained using C-PINN and PINN methods. As is shown in Fig \ref{fig:ex7}, it is evident that our method effectively solves MA equations with Dirichlet boundary condition. This accuracy improvement is further corroborated by the optimization dynamics in Fig. \ref{fig:gn9}, which demonstrates the significantly enhanced landscape smoothness and gradient stability of C-PINN.

\begin{table}[htbp]
    \centering
    \caption{Computational cost and actual errors for C-PINN and PINN to reach target $l^2$ accuracy milestones for problem \eqref{eq:ma1}.}
    \label{tab:ma}
    \begin{tabular}{c l | r r r c c}
    \toprule
    \multirow{2}{*}{\shortstack{Target \\ $l^2$ Error}} & \multirow{2}{*}{Method} & \multicolumn{3}{c}{Computational Cost} & \multicolumn{2}{c}{Actual Errors} \\
    \cmidrule(lr){3-5} \cmidrule(lr){6-7}

    & & Epochs & Newton Iters & Time (s) & \makebox[2.2cm][c]{$l^2$ Error} & \makebox[2.2cm][c]{$l^ \infty$ Error} \\
    \midrule

    \multirow{2}{*}{1.0e-03}
    & C-PINN & 10490 & 10 & 110.52 & 9.90e-04 & 2.33e-03 \\
    & PINN   & \cellcolor{blue!10}7720 & - & \cellcolor{blue!10}80.67 & 9.64e-04 & 2.49e-03 \\
    \midrule

    \multirow{2}{*}{5.0e-04}
    & C-PINN & \cellcolor{blue!10}11500 & 11 & \cellcolor{blue!10}121.07 & 4.90e-04 & 1.86e-03 \\
    & PINN   & 22170 & - & 232.62 & 4.99e-04 & 1.39e-03 \\
    \midrule

    \multirow{2}{*}{1.0e-04}
    & C-PINN & \cellcolor{blue!10}24630 & 24 & \cellcolor{blue!10}259.68 & 9.75e-05 & 4.12e-04 \\
    & PINN   & \cellcolor{gray!10}- & - & \cellcolor{gray!10}- & (4.80e-04) & (8.64e-04) \\
    \midrule

    \multirow{2}{*}{5.0e-05}
    & C-PINN & \cellcolor{blue!10}28590 & 28 & \cellcolor{blue!10}301.35 & 4.92e-05 & 3.70e-04\\
    & PINN   & \cellcolor{gray!10}- & - & \cellcolor{gray!10}- & (4.80e-04) & (8.64e-04)  \\
    \bottomrule
    \end{tabular}
\end{table}
\vspace{-0.2in}
\begin{figure}[H]
    \centering
    \includegraphics[width=0.85\linewidth]{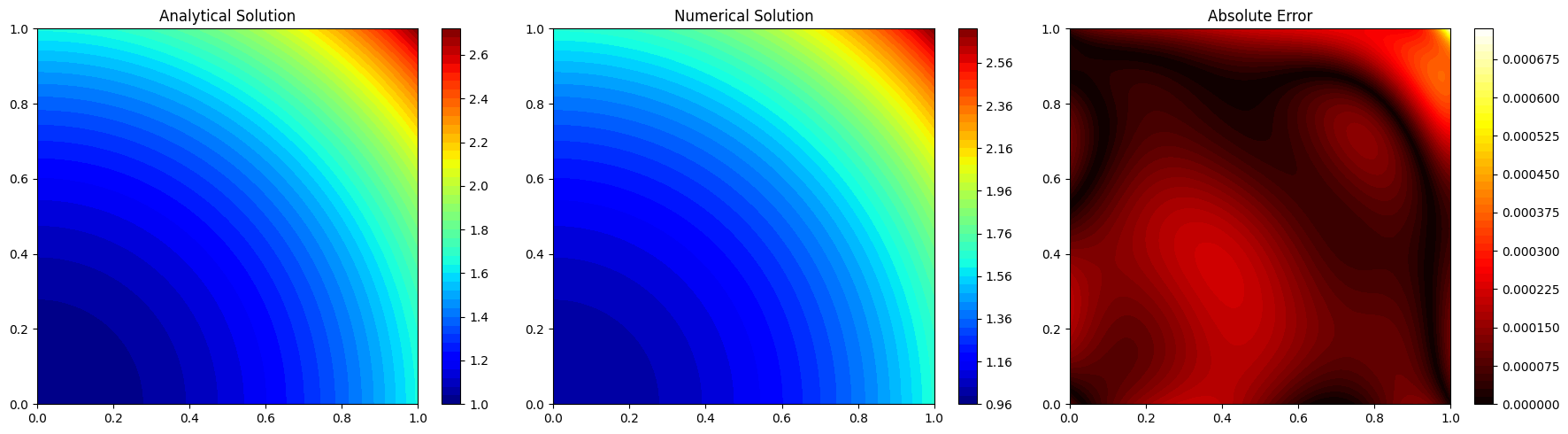}
    \caption{Results for problem Eq. \eqref{eq:ma1}. From left to right: exact solution, numerical solution, and absolute error.}
    \label{fig:ex7}
\end{figure}

\begin{figure}[H]
    \centering
    \includegraphics[width=0.8\linewidth]{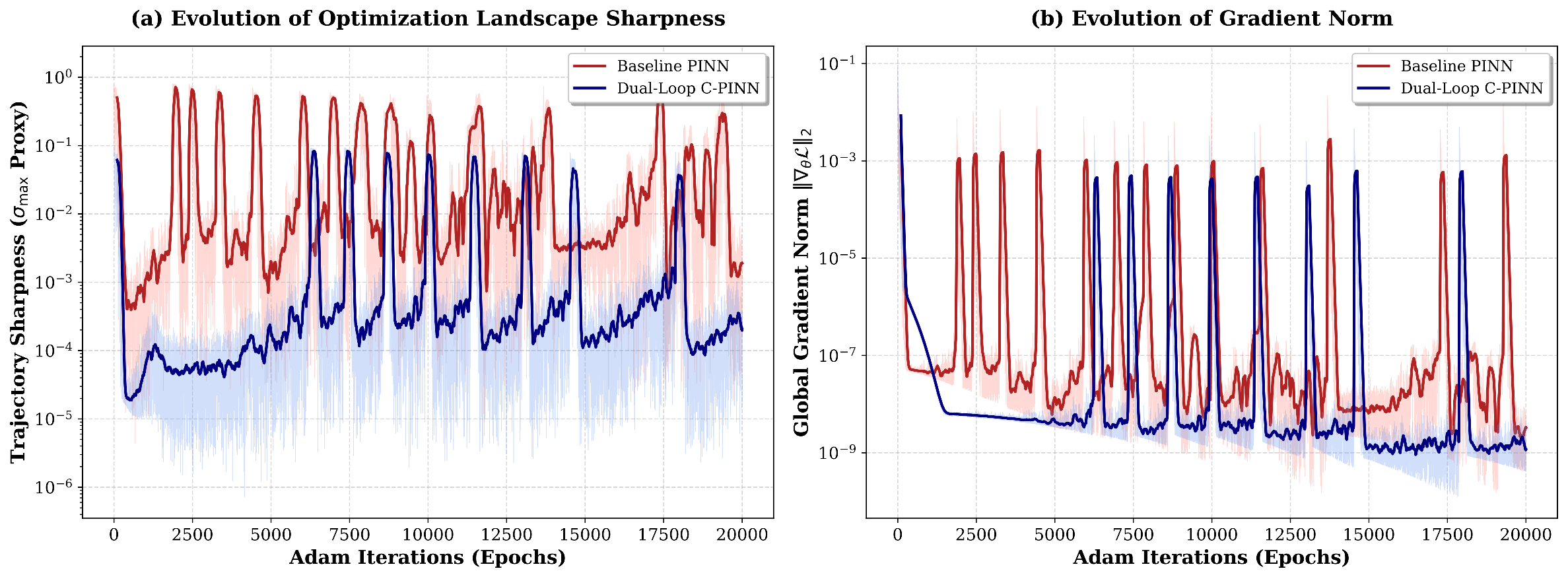}
    \caption{Optimization dynamics comparison: C-PINN vs. PINN for problem Eq. \eqref{eq:ma1}}
    \label{fig:gn9}
\end{figure}

Building upon the numerical results for the Monge-Amp\`{e}re equation with Dirichlet boundary conditions, we further explore its role in optimal transport problems. As is well known, the Monge-Amp\`{e}re equation provides a fundamental link between convex potential functions and optimal transport maps. By considering this application, we aim to demonstrate not only the accuracy but also the practical relevance of the proposed method.

\section{Applications of the Optimal Transport Problem}

The Monge-Amp\`{e}re equation is closely connected to optimal transport theory. In particular, it arises naturally in the characterization of optimal transport maps between probability measures. Let $\mu$ and $\nu$ be two probability measures defined on domains $\Omega$ and $\Omega'$ respectively. The goal of optimal transport is to find a mapping $T:\Omega \rightarrow \Omega'$ that pushes $\mu$ forward to $\nu$ while minimizing a given cost function. A fundamental result in this theory is Brenier's theorem \cite{Brenier1991}, which establishes the existence and structure of the optimal transport map for the quadratic cost.

\begin{theorem} [Brenier]
    Let $\mu$ be absolutely continuous with respect to the Lebesgue measure. Then there exists a convex function $\phi$ such that the optimal transport map $T$ from $\mu$ to $\nu$ is given by
    \begin{equation}
        T(\bs x) = \nabla \phi(\bs x).
    \end{equation}
    Moreover, $\phi$ satisfies the Monge-Amp\`{e}re equation
    \begin{equation}
        \det (D^2 \phi(\bs x)) = \frac{\mu (\bs x)}{\nu (\nabla \phi(\bs x)) }.
    \end{equation}
    subject to the second boundary value condition
    \begin{equation}
        \nabla \phi(V) = V.
    \end{equation}
\end{theorem}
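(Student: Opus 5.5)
The plan is the classical route through Kantorovich duality. I will assume $\mu$ and $\nu$ have finite second moments; this is automatic here since $\Omega,\Omega'$ are bounded. I write $\mu(\bs x)$, $\nu(\bs y)$ for their densities, as in the statement.

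\textbf{Step 1 (optimal plan and convex potential).} Consider the relaxed problem of minimizing $\int |\bs x-\bs y|^2\,d\gamma$ over couplings $\gamma\in\Pi(\mu,\nu)$.
\begin{itemize}
\item Existence of a minimizer $\gamma^*$ follows from tightness of $\Pi(\mu,\nu)$ (Prokhorov) and lower semicontinuity of the cost.
\item Expanding $|\bs x-\bs y|^2=|\bs x|^2+|\bs y|^2-2\bs x\cdot\bs y$, minimizing the quadratic cost is the same as maximizing $\int \bs x\cdot\bs y\,d\gamma$.
\item A standard perturbation argument shows that $\operatorname{supp}\gamma^*$ is cyclically monotone: $\sum_i \bs x_i\cdot(\bs y_{i+1}-\bs y_i)\le 0$ along any finite cycle of support points.
\item Rockafellar's theorem then gives a proper lower semicontinuous convex function $\phi$ with $\operatorname{supp}\gamma^*\subset\partial\phi$, the graph of the subdifferential.
\item Equivalently, one can obtain $\phi$ from the dual problem $\inf\{\int\phi\,d\mu+\int\phi^*\,d\nu\}$, with $\phi^*$ the Legendre transform. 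Here the equality $\phi(\bs x)+\phi^*(\bs y)=\bs x\cdot\bs y$ holds exactly on $\partial\phi$.
\end{itemize}

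\textbf{Step 2 (the plan is a map).} A convex function is locally Lipschitz on the interior of its domain, so by Rademacher (or Alexandrov) it is differentiable outside a Lebesgue-null set. Because $\mu$ is absolutely continuous, this set is $\mu$-negligible. At every point of differentiability, $\partial\phi(\bs x)=\{\nabla\phi(\bs x)\}$. Hence $\gamma^*$ is concentrated on the graph of $T=\nabla\phi$, that is, $\gamma^*=(\mathrm{id}\times\nabla\phi)_\#\mu$. Consequently $T$ pushes $\mu$ forward to $\nu$ and is optimal; since its cost equals the relaxed minimum, it is optimal among all transport maps. Uniqueness $\mu$-a.e. follows by applying the same argument to the averaged plan $\tfrac12(\gamma_1+\gamma_2)$ built from two optimal maps, and using strict convexity of the problem along the graph.

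\textbf{Step 3 (Monge--Amp\`ere equation and boundary condition).} This is the main obstacle. The push-forward identity gives, for test functions $h$,
\begin{equation*}
\int h(\nabla\phi(\bs x))\,\mu(\bs x)\,d\bs x=\int h(\bs y)\,\nu(\bs y)\,d\bs y .
\end{equation*}
To turn this into $\det D^2\phi=\mu/\nu(\nabla\phi)$, I would use the change-of-variables formula for monotone maps (McCann), in which $D^2\phi$ is the Alexandrov Hessian. This yields the equation $\mu$-a.e. in the Alexandrov sense, and the equation in Brenier's weak sense in general. A classical pointwise statement requires further input: Caffarelli's regularity theory, which assumes $\Omega'$ convex and densities bounded above and below. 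I would cite it rather than reprove it.

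The ``second boundary value condition'' $\nabla\phi(V)=V$, which I read as $\nabla\phi(\Omega)=\Omega'$ up to null sets, comes directly from the push-forward property. Indeed, $\nu(\Omega')=1$ forces $\nabla\phi(\bs x)\in\Omega'$ for $\mu$-a.e. $\bs x$. Conversely, the image of $\nabla\phi$ has full $\nu$-measure.
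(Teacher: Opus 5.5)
The paper gives no proof of this statement. It quotes Brenier's theorem as a classical result with a citation, as background for the Newton--Kantorovich linearization in Section 5, so there is no internal argument for yours to match. Your outline is the standard modern proof and is correct: Kantorovich relaxation, cyclical monotonicity of the optimal support, Rockafellar's theorem, almost-everywhere differentiability of convex functions, then McCann's change of variables.

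Compared with the paper's bare citation, your route makes explicit what actually holds under the stated hypotheses. Existence and $\mu$-a.e.\ uniqueness of $T=\nabla\phi$ need only absolute continuity of $\mu$ and finite second moments. The Monge--Amp\`ere equation holds only $\mu$-a.e.\ with the Alexandrov Hessian, or in Brenier's push-forward sense. The printed boundary condition $\nabla\phi(V)=V$ is garbled, and you correctly read it as $\nabla\phi(\Omega)=\Omega'$ up to null sets. Read as a pointwise identity, the paper's formulation is stronger than Brenier's theorem. Your caveat is exactly what the paper glosses over later, when it cites Brenier to conclude that $D^2\phi_k$ is symmetric positive definite. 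That is a Caffarelli-type strict convexity and regularity conclusion, not part of Brenier's result.

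A few small corrections:
\begin{itemize}
\item In the uniqueness step no strict convexity is involved. The Kantorovich cost is linear in $\gamma$, so $\tfrac12(\gamma_1+\gamma_2)$ is again optimal. Step 2 then places it on the graph of a single map, which forces $T_1=T_2$ $\mu$-a.e.
\item Convexity of $\Omega'$ with densities bounded above and below gives, by Caffarelli, strict convexity and $C^{1,\alpha}$ regularity of $\phi$, so $\phi$ is an Alexandrov solution. A classical $C^2$ solution additionally needs H\"older continuous densities, which then yields $C^{2,\alpha}_{\mathrm{loc}}$.
\item In Step 2, say explicitly that $\mu$ is concentrated on $\operatorname{dom}\partial\phi$ and that the boundary of the convex set $\operatorname{dom}\phi$ is Lebesgue-null. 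Then Rademacher applies at $\mu$-a.e.\ point.
\item In Step 3, the division is legitimate because $\nu(\nabla\phi(\bs x))>0$ for $\mu$-a.e.\ $\bs x$. This holds since $\nabla\phi$ pushes $\mu$ to $\nu$ and the zero set of the target density is $\nu$-null.
\end{itemize}
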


This result establishes a direct link between optimal transport and the Monge-Amp\`{e}re equation, where the transport map can be recovered from the gradient of a convex potential. Therefore, solving the Monge-Amp\`{e}re equation provides a principled approach to computing optimal transport maps.

To efficiently compute the transport map, we apply the Newton-Kantorovich linearization to the highly nonlinear Monge-Amp\`{e}re equation. By rewriting the equation as
\begin{equation}
    F(\phi) = \det(D^2 \phi) \nu(\nabla \phi) - \mu = 0,
\end{equation}
we seek an iterative update $\phi = \phi_k + \delta \phi$, where $\phi_k$ is the better-conditioned potential at the $k$-th iteration. Applying the Fr\'{e}chet derivative yields the linearized equation for the increment $\delta \phi$
\begin{equation}
    \left[ \nu(\nabla \phi_k) \text{cof}(D^2 \phi_k) \right] : D^2 \delta \phi + \left[ \det(D^2 \phi_k) \nabla_{\bs y} \nu(\nabla \phi_k) \right] \cdot \nabla \delta \phi = \mu(\bs x) - \det(D^2 \phi_k) \nu(\nabla \phi_k),
\end{equation}
where $\text{cof}(D^2 \phi_k)$ is the cofactor matrix of the Hessian, and $\nabla_{\bs y} \nu$ denotes the gradient of the target density with respect to the mapped coordinates $\bs y = \nabla \phi_k(\bs x)$. Remarkably, this linearization elegantly transforms the non-local density term $\nu(\nabla \phi)$ into a first-order convection term. Consequently, the second-order principal part depends solely on the structure of $\phi_k$. This allows the Cord\`{e}s preconditioner to be seamlessly applied to the principal coefficient matrix $\nu(\nabla \phi_k) \text{cof}(D^2 \phi_k)$ without being compromised by the first-order convection.

To guarantee the theoretical applicability of the Cord\`{e}s condition, the principal coefficient matrix $A^{(k)}(\bs x) = \nu(\nabla \phi_k) \text{cof}(D^2 \phi_k)$ must be uniformly elliptic. According to Brenier's theorem, a valid optimal transport potential $\phi_k$ is inherently better-conditioned, which ensures that its Hessian matrix $D^2 \phi_k$ is symmetric positive definite (SPD). Consequently, its cofactor matrix $\text{cof}(D^2 \phi_k)$ is also strictly SPD. Given that the target density $\nu$ is bounded and strictly positive, the coefficient matrix $A^{(k)}(\bs x)$ remains SPD throughout the domain $\Omega$. In any spatial dimension $n \ge 2$, an SPD matrix intrinsically satisfies the unconditional Cord`{e}s property, satisfying the inequality
\begin{equation}
    \frac{(\text{tr}(A^{(k)}))^2}{\text{tr}((A^{(k)})^2)} \ge n - 1 + \varepsilon, \quad \text{for some } \varepsilon \in (0, 1].
\end{equation}
This strict ellipticity guarantees that the dynamic Cord\`{e}s multiplier $\lambda(\bs x) = \text{tr}(A^{(k)}) / \text{tr}((A^{(k)})^2)$ is well-defined, strictly positive, and bounded away from zero. Thus, it theoretically validates the contraction mapping of the preconditioned operator at every Newton iteration step.

\subsection{Example 5.1 Square-to-Square Optimal Transport} To demonstrate the practical implementation of the proposed method in optimal transport, we consider a canonical example involving transport between two square domains. This setting not only facilitates visualization of the transport map but also serves as a standard benchmark for assessing numerical performance.

We first consider a benchmark example with an analytical solution taken from \cite{Benamou2014}. Let us introduce the following auxiliary function:
\begin{equation}
    q(z)=\left( -\frac{1}{8\pi}z^2+\frac{1}{256\pi ^3}+\frac{1}{32\pi} \right) \cos(8\pi z)+\frac{1}{32\pi ^2}z\sin(8\pi z).
\end{equation}
Based on this function, the two-dimensional source density is defined as
\begin{equation}
    \begin{aligned}
        f(x_1,x_2) = & 1 + 4 \left( q''(x_1)q(x_2) + q(x_1)q''(x_2) \right) \\
        &+ 16 \left( q(x_1)q(x_2)q''(x_1)q''(x_2) - (q'(x_1))^2(q'(x_2))^2 \right),
    \end{aligned}
\end{equation}
on the square domain $\Omega = (-\frac{1}{2},\frac{1}{2}) \times (-\frac{1}{2},\frac{1}{2}) $. The target density is taken to be the uniform distribution on the same domain.

For this problem, the optimal transport map admits an explicit analytical form and can be represented as the gradient of a convex potential $u$, given by
\begin{equation}
    \begin{aligned}
        u_{x_1}(x_1,x_2)=x_1+4q'(x_1)q(x_2), \\
        u_{x_2}(x_1,x_2)=x_2+4q(x_1)q'(x_2).
    \end{aligned}
\end{equation}

In Fig \ref{fig:ma2}, we present the Cartesian grid on the source domain together with the corresponding source density (left), and the deformed grid obtained under the optimal transport map (right). It can be observed that the source density exhibits a pronounced peak near the center of the domain, while remaining relatively flat near the boundary.
\vspace{-0.05in}
\begin{figure}[H]
    \centering
    \includegraphics[width=0.8\linewidth]{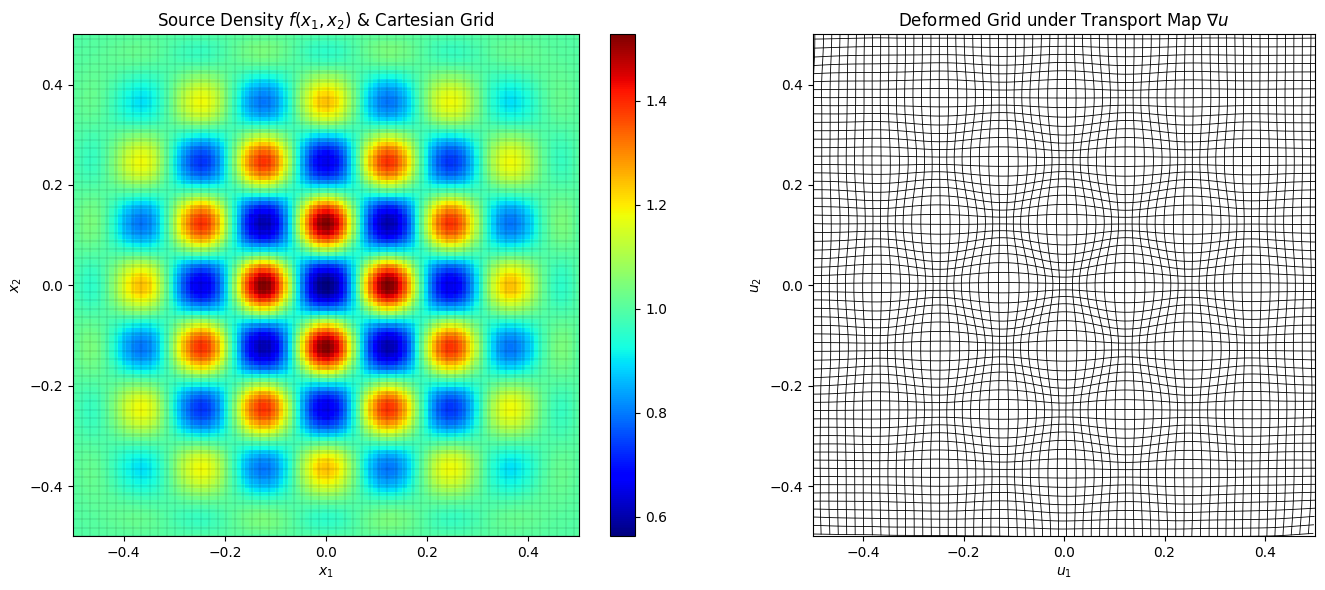}
    \caption{Source density and transported grid under the optimal transport map}
    \label{fig:ma2}
\end{figure}
\vspace{-0.05in}
Under the action of the optimal transport map, the grid undergoes a significant redistribution: the initially uniform Cartesian grid becomes denser in low-density regions and sparser in high-density regions. This behavior provides a clear illustration of the mass conservation mechanism, whereby the transport map adjusts local volume elements, compressing regions of low density and expanding those of high density, to match the source and target distributions.

\subsection{Example 5.2 Optimal Transport on Complex Geometries}
To further demonstrate the robustness and scalability of the proposed method, we extend our study to optimal transport problems defined on geometrically complex surfaces. In particular, we consider a human cortical surface and a lion head model in Fig \ref{fig:ot1}, both characterized by intricate geometrical structures and rich local details \cite{Lei2021}. These examples serve as stringent tests for evaluating the performance of the method in realistic and high-complexity scenarios.

\begin{figure}[H]
    \centering
    \includegraphics[width=0.8\linewidth]{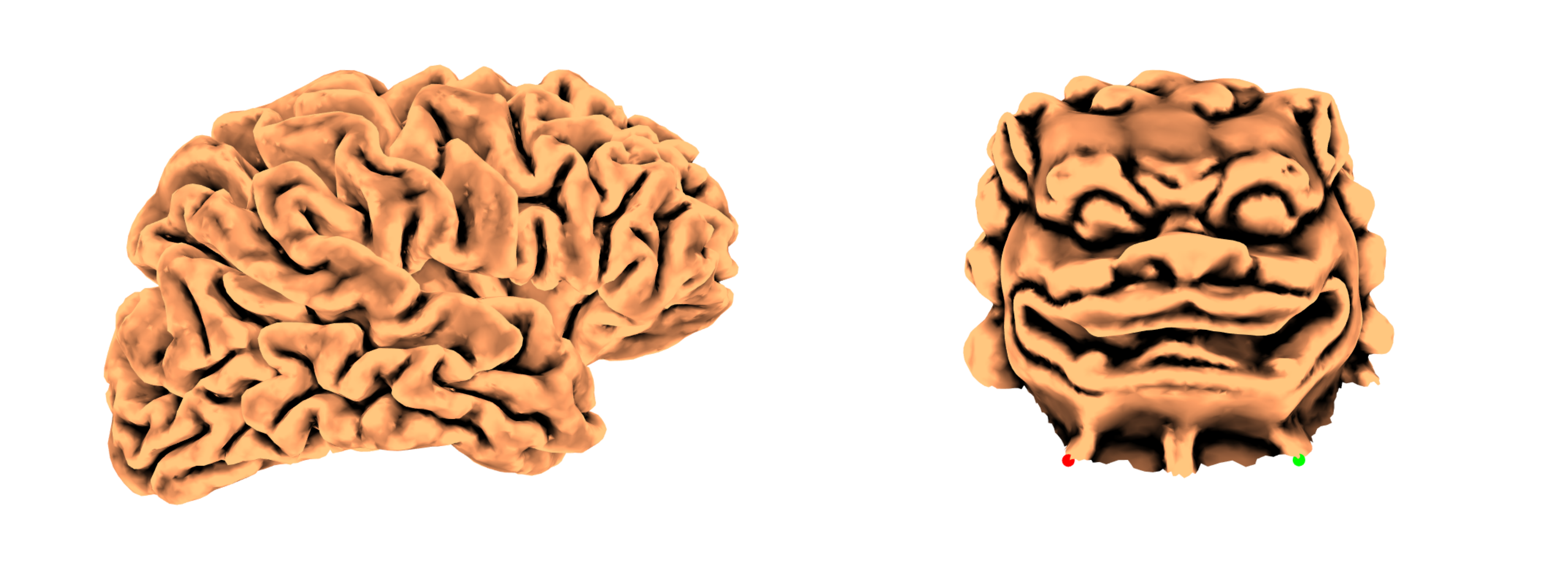}
    \caption{3D surface models of the cortical surface and the lion head}
    \label{fig:ot1}
\end{figure}

The method in \cite{Meng2016} maps the 3D surface onto a 2D planar domain via conformal parameterization. Numerical results show that this approach is stable in terms of angle preservation and effectively maintains the local directional structures of cortical folds as well as the global topological features. However, from the perspective of area distribution in the parameter domain, the induced conformal factor exhibits significant spatial non-uniformity. Regions with high curvature and dense folding are severely compressed, while relatively flat regions with low curvature are noticeably stretched.

Based on this observation, we define the conformal factor as the source density $f$, and take the target density $g$ as a constant. By solving the corresponding optimal transport problem, we obtain a mass-preserving map that redistributes points in the parameter domain, expanding high-density regions and compressing low-density ones, thereby correcting the area distortion introduced by the conformal parameterization. In Fig \ref{fig:ot2}, the left column shows the mapping obtained by the method in \cite{Meng2016}, while the right column presents the results produced by the proposed method.

\begin{figure}[H]
    \centering
    \includegraphics[width=0.4\linewidth]{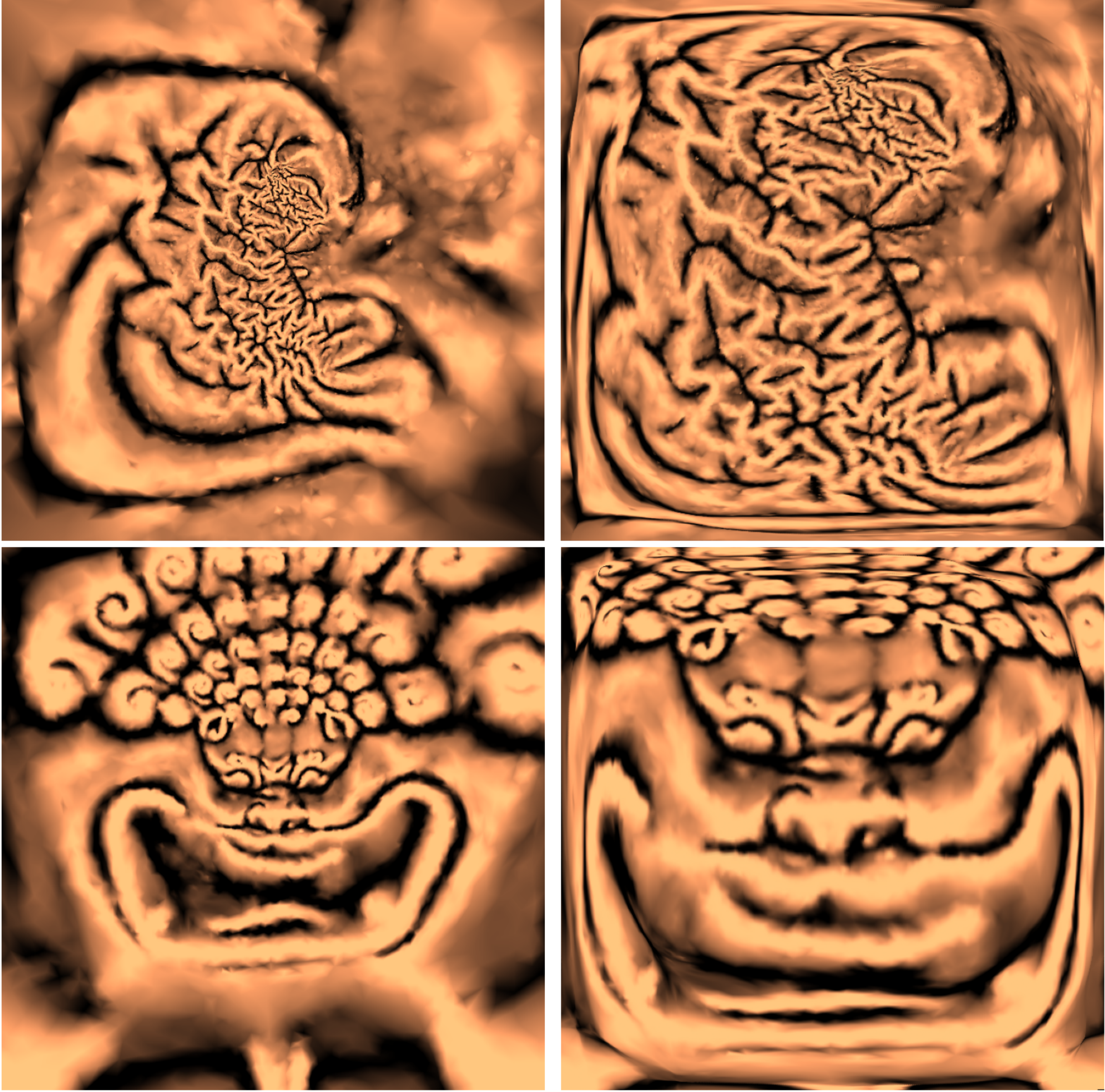}
    \caption{Comparison of conformal and optimal transport mappings}
    \label{fig:ot2}
\end{figure}
\section{Conclusion}
In this work, we propose an improved PINN framework based on the Cord\`{e}s condition, demonstrating its effectiveness in solving both linear and fully nonlinear PDEs. The proposed C-PINN enhances the loss formulation and provides a stable and accurate numerical framework for problems in non-divergence form and beyond. The key contributions of this work can be summarized as follows:
\begin{enumerate}
    \item Introducing a Cord\`{e}s condition-based modification to the PINN loss function, enabling stable and accurate numerical solutions of linear elliptic equations in non-divergence form.
    \item Extending the proposed framework to Hamilton-Jacobi-Bellman equations, demonstrating its capability in handling fully nonlinear PDEs.
    \item Further applying the method to the Monge-Amp\`{e}re equation via linearization and highlighting the generality and flexibility of the C-PINN framework.
    \item Validating the proposed approach through optimal transport applications and demonstrating its effectiveness in practical and geometrically complex scenarios.
\end{enumerate}
Ultimately, the overarching advantage of the proposed C-PINN methodology lies in its seamless integration of rigorous classical PDE theory with modern deep learning optimization. Unlike standard PINNs that frequently suffer from severe gradient pathologies and non-convex loss landscapes when confronted with unconditioned high-order derivatives, C-PINN mathematically preconditions the residual space. This transformation mathematically guarantees a strict contraction mapping property, fundamentally eliminating the training instability inherent to non-divergence and fully nonlinear operators. Consequently, our approach not only preserves the intrinsic mesh-free flexibility and high-dimensional scalability of neural networks, but also endows them with the robust theoretical reliability of traditional numerical analysis, paving a highly efficient and mathematically grounded pathway for complex physical and geometrical applications. Owing to its generality and simplicity, the proposed method is expected to be of broad interest to the scientific and engineering communities.
\\

\appendix

\section{Analytical Construction of Distance Functions}
\begin{table}[htbp]
    \centering
    \caption{Summary of the distance functions $\mathcal{D}(\boldsymbol{x})$ utilized to enforce hard boundary constraints across different test problems.}
    \label{tab:appendix_distance}

    \begin{tabular}{l c}
    \toprule
    Domain $\Omega$ & Distance Function $\mathcal{D}(\boldsymbol{x})$ \\
    \midrule

    $(-2, 2) \times (-2, 2)$ & $(4-x_1^2)(4-x_2^2)$  \\
    \addlinespace

    $\{\boldsymbol{x} \in \mathbb{R}^2 \mid x_1^2 + x_2^2 < 4\}$ & $4 - x_1^2 - x_2^2$  \\
    \addlinespace

    $(-1, 1) \times (-1, 1)$ & $(1-x_1^2)(1-x_2^2)$  \\
    \addlinespace

    $\{\boldsymbol{x} \in \mathbb{R}^3 \mid \sum_{i=1}^3 (x_i/a_i)^2 < 1\}$ & $1 - \sum_{i=1}^3 (x_i/a_i)^2$\\
    \addlinespace

    $(-1,1)^5$ & $\prod_{i=1}^5 (1 - x_i^2)$\\
    \addlinespace

    $(-1/2, 1/2) \times (-1/2, 1/2)$ & $(1/4-x_1^2)(1/4-x_2^2)$  \\
    \addlinespace

    $(0, 1) \times (0, 1)$ & $x_1(1-x_1)x_2(1-x_2)$  \\
    \addlinespace

    \bottomrule
    \end{tabular}
\end{table}

\noindent {\bf Data Availability} The datasets generated during and/or analysed during the current study are available from the corresponding author on request.
\vskip 0.1in
\noindent {\bf Declarations}\\
\noindent {\bf Conflict of interest} The authors declared that they have no conflict of interest.

\bibliographystyle{plain}
\end{document}